\renewrobustcmd*{\bibinitdelim}{\,} 
\theoremstyle{definition}
\newtheorem{definition}{Definition}
\theoremstyle{plain}
\newtheorem{theorem}{Theorem}[section]
\newaliascnt{lemma}{theorem}
\newtheorem{lemma}[lemma]{Lemma}
\newaliascnt{corollary}{theorem}
\newtheorem{corollary}[corollary]{Corollary}
\newaliascnt{proposition}{theorem}
\newtheorem{proposition}[proposition]{Proposition}
\theoremstyle{remark}
\newtheorem{remark}{Remark}
\newcommand{\F}{\mathbb{F}}
\newcommand{\Fq}{\mathbb{F}_q}
\newcommand{\Fqfour}{\mathbb{F}_{p^{4n}}}
\newcommand{\GR}{\text{GR}}
\newcommand{\Z}{\mathbb{Z}}
\newcommand{\T}{\mathcal{T}}
\newcommand{\I}{\mathcal{I}}
\begin{document}
	\title{Solving isomorphism problems about 2-designs from disjoint difference families}
	\author{Christian Kaspers\thanks{Faculty of Mathematics, Otto von Guericke University Magdeburg, 39106 Magdeburg, Germany (email: \href{mailto:christian.kaspers@ovgu.de}{\nolinkurl{christian.kaspers@ovgu.de}}, \href{mailto:alexander.pott@ovgu.de}{\nolinkurl{alexander.pott@ovgu.de}})} \space and Alexander Pott$^*$}
	\maketitle
	
	\begin{abstract}
		Recently, two new constructions of $(v,k,k-1)$ disjoint difference families in Galois rings were presented by \textcite{davis2017,momihara2017}. Both were motivated by a well-known construction of difference families from cyclotomy in finite fields by \textcite{wilson1972}. It is obvious that the difference families in the Galois ring and the difference families in the finite field are not equivalent. A related question, which is in general harder to answer, is whether the associated designs are isomorphic or not. In our case, this problem was raised by the authors of \cite{davis2017}. In this paper we show that the $2$-$(v,k,k-1)$ designs arising from the difference families in Galois rings~\cite{davis2017,momihara2017} and those arising from the difference families in finite fields~\cite{wilson1972} are nonisomorphic by comparing their block intersection numbers.
	\end{abstract}
	
	\paragraph{Keywords} disjoint difference family, Galois ring, combinatorial design, isomorphism problem, intersection number, cyclotomic number
	
\section{Introduction}
	\label{sec:introduction}
	Various types of difference families have long been studied in combinatorial literature \cite{abel2006,beth1999,chang2006,furino1991,wilson1972}. They have applications in coding theory, and communications and information security \cite{ng2016}, and they have connections to many other combinatorial objects, in particular to combinatorial designs. When, like recently by \textcite{davis2017} and by \textcite{momihara2017}, a new construction of difference families is presented, it is a natural question to ask whether the new construction also leads to new difference families. In this paper, we compare two infinite families of difference families in Galois rings \cite{davis2017,momihara2017} with a well-known infinite family of difference families in finite fields which was introduced by \textcite{wilson1972} in 1972. Since the difference families we compare are in different groups, they cannot be equivalent. Hence, we will examine whether the associated combinatorial designs are isomorphic or not. This question is of particular interest because the authors of both \cite{davis2017} and \cite{momihara2017} mention that they were inspired by the construction of \textcite{wilson1972}, and the authors of \cite{davis2017} state explicitly that it is not known in general if the associated designs are nonisomorphic.\par
	We start by defining the relevant objects we examine in this paper.	First, we need the following notations: Let $G$ be an abelian group, $A,B \subseteq G$ and $g \in G$. We define multisets
	\begin{align*}
		\Delta A 		&:= \{ a - a' : a, a' \in A, a \ne a'\},\\
		\Delta_+ A 		&:= \{ a + a' : a, a' \in A, a \ne -a'\},\\
		A-B			 	&:= \{a-b : a \in A, b \in B, a \ne b\},\\
		A + B 			&:= \{a+b: a \in A, b \in B, a \ne -b\},\\
		A + g 			&:= \{a + g : a \in A\}.
	\end{align*}
	In the course of this paper we will sometimes use these notations to denote sets, not multisets. It will be clear from the context if the multiset or the respective set is meant.
	
	\begin{definition}
	\label{def:DF}
		Let $G$ be an abelian group of order $v$, and let $D_1, D_2, \dots, D_b$ be $k$-subsets of $G$. The collection $D = \{D_1, D_2, \dots, D_b\}$ of these subsets is called a \emph{difference family in G with parameters $(v,k,\lambda)$} if each nonzero element of $G$ occurs exactly $\lambda$ times in the multiset union
		\[
			\bigcup_{i=1}^b \Delta D_i.
		\]
		If the subsets $D_1, D_2, \dots, D_b$ are mutually disjoint, they form a \emph{disjoint difference family}. If $b = 1$, one speaks of a \emph{$(v,k,\lambda)$ difference set}. We call $D$ \emph{complete} (or \emph{near-complete}) if the $D_i$ partition $G$ (or $G \setminus \{0\}$, respectively).
	\end{definition}
	
	In this paper we will focus on near-complete $(v,k,k-1)$ disjoint difference families. These objects are closely related to so-called external difference families: 
	
	\begin{definition}
	\label{def:EDF}
		Let $G$ be an abelian group of order $v$, and let $D_1, D_2, \dots, D_b$ be mutually disjoint $k$-subsets of $G$. The collection $D = \{D_1, D_2, \dots, D_b\}$ of these subsets is called a \emph{$(v,k,\lambda)$ external difference family} if each nonzero element of $G$ occurs exactly $\lambda$ times in the multiset union
		\[
			\bigcup_{\substack{1 \le i,j \le b \\ i \ne j}} \left(D_i - D_j\right).
		\]
		Analogously to \autoref{def:DF} we call an external difference family \emph{complete} (or \emph{near-complete}) if the $D_i$ partition the (nonzero) elements of $G$.
	\end{definition}
	
	The following \autoref{prop:EDF_DDF} shows that under certain conditions a disjoint difference family is also an external difference family. This result was observed by \textcite{momihara2017}, and, for near-complete disjoint difference families, it was also mentioned by \textcite{chang2006} and  \textcite{davis2017}. We will add a short proof.

	\begin{proposition}
	\label{prop:EDF_DDF}
		Let $G$ be an abelian group of order $v$, and let $D = \{D_1, D_2, \dots, D_b\}$ be a $(v,k,\lambda)$ disjoint difference family in $G$. The collection $D$ forms an external difference family in $G$ if and only if the union $\bigcup_{i=1}^{b} D_i$ of the $D_i$ is a $(v,bk,\lambda')$ difference set in $G$ for some integer $\lambda'>\lambda$. As an external difference family, $D$ has parameters $(v,k,\lambda'-\lambda)$.
	\end{proposition}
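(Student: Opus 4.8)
The plan is to reduce both implications to a single multiset identity. Set $U := \bigcup_{i=1}^{b} D_i$; since the $D_i$ are mutually disjoint, $|U| = bk$, so $U$ can only be a $(v,bk,\lambda')$ difference set. I would then decompose the multiset $\Delta U$ according to where the two arguments of a difference lie: for $u,u' \in U$ with $u \neq u'$, either both lie in the same part $D_i$, contributing a copy of an element of $\Delta D_i$, or $u \in D_i$ and $u' \in D_j$ with $i \neq j$. The crucial point --- and the only place the disjointness hypothesis is used --- is that in this second case $u \neq u'$ holds automatically because $D_i \cap D_j = \emptyset$, so \emph{every} ordered pair $(u,u') \in D_i \times D_j$ with $i \neq j$ contributes exactly one copy of $u-u'$. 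This gives, as multisets,
\[
  \Delta U \;=\; \Bigl(\bigcup_{i=1}^{b} \Delta D_i\Bigr) \;\cup\; \Bigl(\bigcup_{\substack{1 \le i,j \le b\\ i \ne j}} (D_i - D_j)\Bigr),
\]
and I expect this decomposition to be the heart of the argument; everything afterwards is a comparison of multiplicities.

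Next I would invoke the difference-family hypothesis: the first bracketed multiset contains each nonzero $g \in G$ with multiplicity exactly $\lambda$. Writing $e(g)$ for the multiplicity of a nonzero $g$ in the second multiset $M := \bigcup_{i \ne j} (D_i - D_j)$, the identity shows the multiplicity of $g$ in $\Delta U$ equals $\lambda + e(g)$. Both directions now follow at once. On the one hand, $U$ is a $(v,bk,\lambda')$ difference set exactly when $\lambda + e(g)$ is independent of $g$, i.e.\ exactly when $e(g)$ is independent of $g$, which is precisely the defining condition for $D$ to be an external difference family; and when this holds the common value of $e(g)$ is $\lambda' - \lambda$, so $D$ has parameters $(v,k,\lambda'-\lambda)$. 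Conversely, if $D$ is an external difference family with parameter $\mu$, then $e(g) = \mu$ for all nonzero $g$, so $\Delta U$ has constant multiplicity $\lambda + \mu$ and $U$ is a $(v,bk,\lambda+\mu)$ difference set, i.e.\ $\lambda' = \lambda + \mu$.

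It remains to justify $\lambda' > \lambda$. In the direction assuming $U$ is a difference set this is part of the hypothesis. In the other direction, in the nondegenerate case $b \ge 2$ the multiset $M$ has $b(b-1)k^2$ elements, all of them nonzero (again by disjointness), so the common multiplicity of the nonzero elements of $M$ is $\mu = b(b-1)k^2/(v-1) \ge 1$, whence $\lambda' = \lambda + \mu > \lambda$. I anticipate no genuine difficulty anywhere: the proof is simply the multiset identity for $\Delta U$ followed by matching up multiplicities, and the only subtlety worth stating carefully is why disjointness makes the off-diagonal terms $D_i - D_j$ appear in $\Delta U$ without any correction.
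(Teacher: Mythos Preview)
Your proposal is correct and follows essentially the same route as the paper: both proofs hinge on the single multiset identity
\[
  \Delta\Bigl(\bigcup_{i=1}^{b} D_i\Bigr) \;=\; \bigcup_{i=1}^{b} \Delta D_i \;\cup\; \bigcup_{\substack{1 \le i,j \le b\\ i \ne j}} (D_i - D_j),
\]
and then read off the equivalence by comparing multiplicities. Your write-up is in fact a bit more careful than the paper's in separating the two implications and in explicitly justifying $\lambda' > \lambda$ in the converse direction (via the count $\mu = b(b-1)k^2/(v-1) \ge 1$ for $b \ge 2$), whereas the paper leaves this implicit.
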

	\begin{proof}
		Let $G$ be an abelian group of order $v$, and let $D = \{D_1, D_2, \dots, D_b\}$ be a collection of mutually disjoint $k$-subsets whose union $\bigcup_{i=1}^{b} D_i$ is a $(v,bk,\lambda')$ difference set for some integer $\lambda'>2$. We can split all the differences in $\bigcup_{i=1}^{b} D_i$ in the following way into the \enquote{internal} and the \enquote{external} differences of the $D_i$:
		\[
			\Delta \left(\bigcup_{i=1}^{b} D_i\right) = \bigcup_{i=1}^b \Delta D_i \cup \bigcup_{\substack{1 \le i,j \le b \\ i \ne j}} (D_i-D_j).
		\]
		Since $\bigcup_{i=1}^{b} D_i$ is a difference set, each element $g \in G \setminus \{0\}$ is represented as $\lambda'$ differences in $\Delta(\bigcup_{i=1}^{b} D_i)$. It follows that each nonzero element in $G$ is represented as $\lambda$ differences in $\bigcup_{i=1}^b \Delta D_i$ (meaning $D$ is a $(v,k,\lambda)$ disjoint difference family) if and only if it is represented $\lambda'-\lambda$ times in $\bigcup_{1 \le i,j \le b,\, i \ne j} (D_i-D_j)$ (meaning $D$ is a $(v,k,\lambda'-\lambda)$ external difference family).
	\end{proof}
	From \autoref{prop:EDF_DDF} it follows that every near-complete $(v,k,k-1)$ disjoint difference family is also a near-complete $(v,k,v-k-1)$ external difference family because the $D_i$ partition $G\setminus\{0\}$ and $G\setminus\{0\}$ is a $(v,v-1,v-2)$ difference set in $G$.
%
	For extended background on $(v,k,k-1)$ disjoint difference families the reader is referred to \textcite{buratti2017} who gives an overview over these difference families and summarizes several constructions, including the one by \textcite{davis2017} (see \autoref{sec:EDF_GR} for some additional information). As mentioned above, every difference family gives rise to a combinatorial design.
	\begin{definition}
	\label{def:design}
		Let $P$ be a set with $v$ elements (\emph{points}). A \emph{$t$-$(v,k,\lambda)$~design} (or \emph{$t$-design}, in brief) is a collection of $k$-subsets (\emph{blocks}) of $P$ such that each $t$-subset of $P$ is contained in exactly $\lambda$ blocks.
	\end{definition}
	 The designs coming from difference families are $2$-designs, which are often referred to as \emph{balanced incomplete block designs (BIBD)}. They can be constructed from difference families in the following way.
	
	\begin{definition}
		Let $G$ be an abelian group, and let $D = \{D_1, D_2, \dots, D_b\}$ be a family of subsets of $G$. The \emph{development $dev(D)$ of $D$} is the collection
		\[
			\left\lbrace D_i + g : D_i \in D, g \in G\right\rbrace
		\]
		of all the translates of the subsets contained in $D$. The sets $D_1, D_2, \dots, D_b$ are called the \emph{base blocks} of $dev(D)$.
	\end{definition}
	
	In other words: The development $dev(D)$ of $D$ contains the orbits of the sets $D_i \in D$ under the action of $G$. If all the orbits have full length, $dev(D)$ consists of $vb$ blocks. The following \autoref{prop:dev_design} is well known. We will add a proof for completeness.
	
	\begin{proposition}
	\label{prop:dev_design}
		Led $D$ be a $(v,k,\lambda)$ difference family in an abelian group $G$. The development $dev(D)$ of $D$ forms a $2$-$(v,k,\lambda)$ design with point set $G$.
	\end{proposition}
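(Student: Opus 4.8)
The plan is to verify the two defining properties of a $2$-$(v,k,\lambda)$ design directly from the difference-family hypothesis. First, note that every block of $dev(D)$ is a translate $D_i + g$ of a base block, hence has exactly $k$ elements, and the point set is $G$, which has $v$ elements. So the only real content is to show that every pair of distinct points of $G$ is contained in exactly $\lambda$ blocks.

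The key step is a counting bijection. Fix two distinct points $x, y \in G$. I would count pairs $(i, g)$ with $i \in \{1,\dots,b\}$ and $g \in G$ such that $\{x, y\} \subseteq D_i + g$. The condition $\{x,y\} \subseteq D_i + g$ is equivalent to the existence of distinct elements $a, a' \in D_i$ with $a + g = x$ and $a' + g = y$; subtracting gives $a - a' = x - y$. Conversely, any ordered pair $(a, a')$ of distinct elements of some $D_i$ with $a - a' = x - y$ determines $g = x - a$ uniquely, and then automatically $a' + g = y$. Thus the number of blocks through $\{x,y\}$ equals the number of ordered pairs $(a,a')$ with $a, a' \in D_i$ for some $i$, $a \ne a'$, and $a - a' = x - y$ — which is exactly the multiplicity of the nonzero element $x - y$ in the multiset union $\bigcup_{i=1}^b \Delta D_i$. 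By the definition of a $(v,k,\lambda)$ difference family, that multiplicity is $\lambda$, independent of the chosen pair.

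One small point to be careful about: when an orbit of some $D_i$ under $G$ is shorter than $v$ (i.e.\ $D_i$ has a nontrivial stabilizer), distinct pairs $(i,g)$ may yield the \emph{same} block of $dev(D)$, so counting pairs $(i,g)$ could overcount the blocks through $\{x,y\}$. However, the bijection above is between pairs $(i,g)$ with $\{x,y\}\subseteq D_i+g$ and ordered pairs $(a,a')\in D_i\times D_i$ with $a-a'=x-y$; if a block $B = D_i + g = D_i + g'$ arises from two values $g \ne g'$, then it genuinely is counted with multiplicity in $dev(D)$ viewed as a collection (a multiset) of blocks, and the difference-family count matches this multiplicity, so the design — interpreted as the multiset $dev(D)$ — still has $\lambda$ blocks on each pair. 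I do not expect any serious obstacle here; the argument is a routine but clean double-counting, and the main thing is to state the translate-to-difference correspondence precisely.
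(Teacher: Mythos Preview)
Your argument is correct and follows essentially the same route as the paper: both establish a bijection between blocks $D_i+g$ containing $\{x,y\}$ and representations of $x-y$ as a difference $a-a'$ within some $D_i$, then invoke the difference-family definition. You are slightly more explicit about the bijection and the short-orbit/multiset subtlety, but the underlying idea is identical.
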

	\begin{proof}
		Let $D = \{D_1, D_2, \dots, D_b\}$ be a $(v,k,\lambda)$ difference family in an abelian group $G$. Take an arbitrary $2$-subset $\{t_1, t_2\}$ of $G$. We need to show that $\{t_1, t_2\}$ is contained in $\lambda$~blocks of $dev(D)$. Let $d = t_1-t_2$. Since $d \ne 0$, $d$ is represented $\lambda$ times as a difference $d = d' - d''$, where $d', d'' \in D_i, 1\le i \le b$. Obviously, the differences in a base block $D_i$ and in all its translates $D_i+g, g \in G$, are the same, in short: $\Delta D_i = \Delta (D_i + g)$ for all $1 \le i \le b$ and $g \in G$. Hence, for each of the $\lambda$ pairs $d', d''$ we choose $g$ such that $d' + g = t_1$. Then $d'' + g = t_2$ and consequently $\{t_1, t_2\} \subseteq D_i + g$.
	\end{proof}
	
\section{Disjoint difference families from cyclotomy in finite fields}
	\label{sec:DDF_Fq}
	First, we present the well-known construction of disjoint difference families in finite fields by \textcite{wilson1972}. It makes use of the cyclotomoy of the $e$-th powers in a finite field. Let $q$ be a power of a prime $p$. We denote by $\Fq$ the finite field with $q$ elements and by $\alpha$ a generator of the multiplicative group $\Fq^*$ of $\Fq$. 
	
	\begin{theorem}
	\label{th:Fq_DDF}
		Let $e,f$ be integers satisfying $ef = q-1$, $e,f \geq 2$, and let 
		\begin{align*}
		\label{eq:C_i}
			C_i = \{ \alpha^{t}\ | \ t \equiv i \pmod e  \}, i = 0, 1, \dots, e-1,
		\end{align*}
		be the cosets of the unique subgroup $C_0$ of index $e$ and order $f$ formed by the $e$-th powers of~$\alpha$ in $\Fq^*$. Then, the family $C = \{C_0, C_1, \dots, C_{e-1}\}$ of all these cosets forms a $(q, f, f-1)$ near-complete disjoint difference family in the additive group $(\Fq,+)$.
	\end{theorem}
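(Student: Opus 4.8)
The plan is to verify the two defining properties of \autoref{def:DF} directly: that the cosets $C_i$ partition $\Fq\setminus\{0\}$ (so that $C$ is near-complete), and that every nonzero element of $\Fq$ occurs exactly $f-1$ times in $\bigcup_{i=0}^{e-1}\Delta C_i$. The first property is immediate from the structure of cyclic groups: $C_0$ is the unique subgroup of $\Fq^*$ of order $f$, its cosets $C_0,\dots,C_{e-1}$ are pairwise disjoint, each has size $f$, and together they exhaust $\Fq^*$; since $ef=q-1=|\Fq\setminus\{0\}|$, this is indeed a partition of the nonzero elements, and each block has the required size $f$.

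For the difference count I would exploit the multiplicative structure. Fix an index $i$ and take an ordered pair $(a,a')\in C_i\times C_i$ with $a\ne a'$. Because $C_i$ is a coset of $C_0$, the ratio $r:=a/a'$ lies in $C_0$, and $r\ne 1$ since $a\ne a'$; conversely, for every $r\in C_0\setminus\{1\}$ and every $a'\in C_i$ the element $a:=ra'$ again lies in $C_i$ and satisfies $a\ne a'$. Hence $(a,a')\mapsto(r,a')$ is a bijection between the ordered pairs of distinct elements of $C_i$ and $(C_0\setminus\{1\})\times C_i$, under which the difference $a-a'$ is transformed into $a'(r-1)$. Taking the multiset union over $i$ and using $\bigcup_i C_i=\Fq^*$ yields the identity
\[
	\bigcup_{i=0}^{e-1}\Delta C_i \;=\; \bigl\{\, a'(r-1) \;:\; r\in C_0\setminus\{1\},\; a'\in\Fq^*\,\bigr\}
\]
of multisets.

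The proof then finishes by counting on the right-hand side: for each fixed $r\in C_0\setminus\{1\}$ the factor $r-1$ is a nonzero element of $\Fq$, so $a'\mapsto a'(r-1)$ is a permutation of $\Fq^*$ and contributes every nonzero element of $\Fq$ exactly once; since $|C_0\setminus\{1\}|=f-1$, summing over the $f-1$ admissible values of $r$ shows that every nonzero element of $\Fq$ is represented exactly $f-1$ times. Combined with the partition property, this proves that $C$ is a $(q,f,f-1)$ near-complete disjoint difference family. I do not expect a genuine obstacle here; the only points requiring care are the bookkeeping of multiplicities (the operator $\Delta$ ranges over ordered pairs, so $|\Delta C_i|=f(f-1)$) and the degenerate situations, which are ruled out by the hypotheses $e,f\ge 2$ ensuring both that $C_0\setminus\{1\}\ne\varnothing$ and that $C$ consists of more than a single block.
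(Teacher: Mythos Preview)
Your proof is correct and follows essentially the same idea as the paper's: both exploit multiplication in $\Fq^*$ to show that the internal differences are uniformly distributed over $\Fq^*$. The paper argues by symmetry (multiplication by $z=yx^{-1}$ turns representations of $x$ into representations of $y$, so all counts agree, and then divides the total $ef(f-1)$ by $q-1$), whereas you give the slightly more explicit parametrisation $a-a'=a'(r-1)$ with $r\in C_0\setminus\{1\}$ and $a'\in\Fq^*$; these are two presentations of the same argument.
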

	\begin{proof}
		This proof is similar to the proof by \textcite[Theorem~2.1]{davis2017} where the authors prove that $C$ is a near-complete external difference family (see \autoref{prop:EDF_DDF}). Let $x,y \in \Fq^*$, and let $z = yx^{-1}$. Suppose $x = c - c'$ for $c, c' \in C_i, 0 \le i \le e-1$. Then $y = zx = zc - zc'$ and, obviously, $zc, zc'$ are in the same coset $C_j$. Hence, we have found a representation of $y$ as the difference of two distinct elements from the same set $C_j$. The other way around, every difference for $y$ will give us a difference for $x$. Consequently, every element of $\Fq^*$ will have the same number of differences. So, all we need to do is to count the total number of differences and divide it by the number of elements in $\Fq^*$: There are $e$~sets~$C_i$, and in each $C_i$ we can calculate $f(f-1)$ differences, giving us a total of $ef(f-1)$ differences. In $\Fq^*$ there are $q-1 =ef$ elements. Hence, each element $x \in \Fq^*$ will have
		\[
			\frac{ef(f-1)}{ef} = f-1
		\]
		differences $x = c -c'$ where $c$ and $c'$ come from the same set $C_i$.
	\end{proof}
	
	We remark that this theorem can also be proved using the results of \textcite[Theorem~3.3, Corollary~3.5]{furino1991}. Employing the construction of a $2$-design mentioned above, the development $dev(C)$ of $C$ is a $2$-$(q, f, f-1)$~design.
	
	\section{Galois rings}
	\label{sec:Galois_rings}
	In this section, we will give a short introduction to Galois rings, see the work by \textcite{wan2003} for extended general background on this topic. Let $p$ be a prime, and let $f(x) \in \Z_{p^m}[x]$ be a monic basic irreducible polynomial of degree~$r$. The factor ring $\Z_{p^m}[x]/ \langle f(x)\rangle$ is called a \emph{Galois ring} of characteristic $p^m$ and extension degree $r$. It is denoted by $\GR(p^m, r)$, and its order is $p^{mr}$. Since any two Galois rings of the same characteristic and the same order are isomorphic, we will speak of \emph{the} Galois ring $\GR(p^m, r)$.\par
	Galois rings are local commutative rings. The unique maximal ideal of the ring $R := \GR(p^m, r)$ is $\I = pR := \{pa : a \in R\}$. The factor ring $R / \I$ is isomorphic to the finite field $\F_{p^r}$ with $p^r$ elements. As a system of representatives of $R/\I$ we take the \emph{Teichmüller set} $\T = \{0, 1, \xi, \dots, \xi^{p^r-2}\}$ where $\xi$ denotes a root of order $p^r-1$ of $f(x)$. It is convenient to choose the \emph{generalized Conway polynomial}, i.\,e. the Hensel lift from $\F_p[x]$ to $\mathbb{Z}_{p^m}[x]$ of the Conway polynomial, as our polynomial $f(x)$ since then, $x +(f)$ is a generator of the Teichmüller group, and we set $\xi = x + (f)$ (see \autocite[Section~1.3]{zwanzger2011} for more information on the generalized Conway polynomial and its construction). An arbitrary element $a$ of $R$ has a unique \emph{$p$-adic representation} $a = \alpha_0 + p\alpha_1 + \dots + p^{m-1}\alpha_{m-1}$, where $\alpha_0, \alpha_1, \dots, \alpha_{m-1} \in \T$.\par
	The elements of $R \setminus \I$ are all the units of $R$, this \emph{unit group} is denoted by $R^*$. It has order $p^{mr} - p^{(m-1)r} = p^{(m-1)r}(p^r-1)$ and is the direct product of the cyclic \emph{Teichmüller group $\T^* = \T \setminus \{0\} = \{ 1, \xi, \dots, \xi^{p^r-2}\}$} of order $p^r-1$ and the \emph{group of principal units $\mathbb{P} := 1 + \I$} of order $p^{(m-1)r}$. If $p$ is odd or if $p = 2$ and $m \leq 2$, then $\mathbb{P}$ is a direct product of $r$ cyclic groups of order $p^{m-1}$. If $p = 2$ and $m \geq 3$, then $\mathbb{P}$ is a direct product of a cyclic group of order $2$, a cyclic group of order $2^{m-2}$ and $r-1$ cyclic groups of order~$2^{m-1}$. So we have $R^* = \T^* \times \mathbb{P}$. In this paper, we will only consider Galois rings of characteristic~$p^2$. In this case, $(1+p\alpha)(1+p\beta) = 1+p(\alpha + \beta)$ for any $\alpha, \beta \in \T$, and every unit $u \in \GR(p^2, r)^*$ has a unique representation $u = \alpha_0 (1 + p\alpha_1), \alpha_0, \alpha_1 \in \T, \alpha_0 \ne 0$. Moreover, if $m=2$, the group of principal units $\mathbb{P}$ is a direct product of $r$ cyclic groups of order $p$ and thus has the structure of an elementary abelian group of order $p^r$.
	
\section{Disjoint difference families in Galois rings I}
	\label{sec:DDF_GR}
	We will now present the new construction of disjoint difference families in Galois rings $\GR(p^2, 2n)$ with characteristic $p^2$ and even degree $r = 2n, n \in \mathbb{N}$, that was introduced by \textcite{momihara2017}: 
	Let $p$ be a prime. Let $R_{2n}$ denote the Galois ring $\GR(p^2, 2n) = \Z_{p^2}[x] / \langle f(x) \rangle$, where $f(x)$ is a monic basic irreducible polynomial of degree $2n$, and let $\xi$ be a root of order $p^{2n}-1$ of $f(x)$. Let $\I_{2n}$ be the maximal ideal and let $\mathbb{P}_{2n} = 1 + \I_{2n}$ be the group of principal units of $R_{2n}$. Moreover, we have the Teichmüller set 
	\[
		\T_{2n} = \{ 0, 1, \xi, \dots, \xi^{p^{2n}-2}\},
	\]
	and each element of $R_{2n}$ has a unique $p$-adic representation
	\[
		a_0 + pa_1, \quad a_0, a_1 \in \T_{2n}.
	\]
	The Galois ring $R_{2n}$ contains a unique Galois ring $R_n = \GR(p^2, n)$ of characteristic $p^2$ and degree $n$ as its subring \autocite[Theorem~14.24]{wan2003}. It can be constructed in the following way \autocite[Corollary~14.28]{wan2003}: Obviously, $\xi^{(p^n+1)}$ is a root of order $p^n-1$ of $f(x)$. It follows that
	\[
		\T_n = \{ 0, 1, \xi^{p^n+1}, \xi^{2(p^n+1)}, \dots, \xi^{(p^n-2)(p^n+1)}\}
	\]
	is the Teichmüller set of $R_n$ and $R_n = \{a_0 + pa_1: a_0, a_1 \in \T_n \}$. Then,
	\[
		R_n^* = \{ \alpha_0(1+p\alpha_1) :\alpha_0,\alpha_1 \in \T_n, \alpha_0 \ne 0\}
	\]
	is the unit group of the subring $R_n$. Analogously to $R_{2n}$, let $\I_n$ denote the maximal ideal and $\mathbb{P}_n$ the group of principal units of $R_n$. So we have $R_n^* = \T_n^* \times \mathbb{P}_n$. Now, let $pS$ be a system of representatives of $\I_{2n}/\I_n$ which means that $1 + pS$ will be a system of representatives of $\mathbb{P}_{2n} / \mathbb{P}_n$. Each element of $1+ pS$ can be written as $1 + px$ for some $x \in \T_{2n}$, i.\,e. $1 + pS = \{1+px : x \in S\}$ for some subset $S$ of $\T_{2n}$. Write $S = \{x_0, x_1, \dots, x_{p^n-1}\}$.	Finally, define a coset $P$ of the maximal ideal $\I_n$ of $R_n$ as
	\[
		P = \{ p\xi^{p^n}, p\xi^{(p^n+1)+p^n}, p\xi^{2(p^n+1)+p^n}, \dots, p\xi^{(p^n-2)(p^n+1)+p^n} \}.
	\]
	\begin{theorem}[{\autocite[Theorem~1]{momihara2017}}]
	\label{th:momihara_3.1}
		Using the notation from above, define subsets
		\begin{equation*}
			D_i = \xi^i \left( P \cup \left( \bigcup_{j = 0}^{p^n-1} \xi^j (1+px_j)R_n^*\right)\right), i = 0, 1, \dots, p^n,
		\end{equation*}
		of $R_{2n}$. The family $D = \{D_0, D_1, \dots, D_{p^n}\}$ forms a near-complete disjoint difference family in $(R_{2n}, +)$ with parameters $\left(p^{4n}, (p^{2n}+1)(p^n-1), (p^{2n}+1)(p^n-1) - 1 \right)$.
	\end{theorem}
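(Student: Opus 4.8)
The plan is to establish the claim in two stages: first that $D_0,\dots,D_{p^n}$ partition $R_{2n}\setminus\{0\}$, and then that every nonzero element of $R_{2n}$ is represented exactly $(p^{2n}+1)(p^n-1)-1$ times among the internal differences $\bigcup_{i}\Delta D_i$. The first stage already forces the stated parameters: if the $D_i$ partition $R_{2n}\setminus\{0\}$, then $b=p^n+1$, $k=|D_0|$, and $bk=p^{4n}-1$.

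For the first stage I would begin by recording the multiplicative symmetries on which the construction rests. Since $\T_n^{*}=\langle\xi^{p^n+1}\rangle$ we have $\xi^{p^n+1}R_n^{*}=R_n^{*}$ and $\xi^{p^n+1}P=P$, hence $\xi^{p^n+1}D_0=D_0$, so the indices of the $D_i$ may be read modulo $p^n+1$. Using the decomposition $R_n^{*}=\T_n^{*}\times\mathbb{P}_n$ together with the fact that $\{1+px_j:0\le j\le p^n-1\}$ is a transversal of $\mathbb{P}_{2n}/\mathbb{P}_n$, I would check that the $p^n$ cosets $\xi^{j}(1+px_j)R_n^{*}$ are mutually disjoint sets of units, each disjoint from $P\subseteq\I_{2n}$; this gives $|D_i|=(p^n-1)+p^{n}\,|R_n^{*}|=(p^{2n}+1)(p^n-1)$ and hence $bk=(p^n+1)(p^{2n}+1)(p^n-1)=p^{4n}-1$. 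A second application of the transversal property, now letting $i$ vary as well, shows $\bigcup_{i}\xi^{i}P=\I_{2n}\setminus\{0\}$ and $\bigcup_{i}\bigcup_{j}\xi^{i+j}(1+px_j)R_n^{*}=R_{2n}^{*}$, so that $\bigcup_{i}D_i=R_{2n}\setminus\{0\}$, i.e.\ $D$ is near-complete. Since $\bigcup_i D_i$ is then trivially a $(p^{4n},p^{4n}-1,p^{4n}-2)$ difference set, \autoref{prop:EDF_DDF} already reduces the second stage to the external-difference-family property; I would nevertheless carry out the argument directly.

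For the second stage I would use the additive characters of $R_{2n}$. Writing $v=p^{4n}$, $k=(p^{2n}+1)(p^n-1)$, and $\widehat{D_i}(\chi)=\sum_{x\in D_i}\chi(x)$, and using $bk=v-1$, the standard character-sum criterion for difference families shows that $D$ is a $(v,k,k-1)$ disjoint difference family if and only if $\sum_{i=0}^{p^n}|\widehat{D_i}(\chi)|^{2}=v-k$ for every nontrivial additive character $\chi$. Since $D_i=\xi^iD_0$ one has $\widehat{D_i}(\chi)=\widehat{D_0}(\chi_i)$ with $\chi_i=\chi(\xi^i\,\cdot\,)$, so it suffices to evaluate $\widehat{D_0}$ and sum its squared modulus over the orbit $\chi_0,\dots,\chi_{p^n}$. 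I would then distinguish two cases. If $\chi$ is trivial on $\I_{2n}$, then $P$ contributes $|P|$ and the unit part of $D_0$ contributes a character sum over the residue field: the units of $D_0$ reduce onto $p^n$ of the $p^n+1$ cosets of $\F_{p^n}^{*}$ in $\F_{p^{2n}}^{*}$ with constant fibre size $p^n$, which places this case in the framework of the cyclotomy of index $p^n+1$ in $\F_{p^{2n}}$ --- precisely the situation of \autoref{th:Fq_DDF} --- and the orbit sum collapses to $v-k$ because $\ker(\operatorname{tr}_{\F_{p^{2n}}/\F_{p^n}})\setminus\{0\}$ is a single coset of $\F_{p^n}^{*}$. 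If $\chi$ is nontrivial on $\I_{2n}$, say given by a unit $a$, then $P$ again contributes an elementary Gauss sum over $\F_{p^n}^{*}$, while the unit part of $D_0$ contributes $\sum_{j}G_j$, where $G_j$ is the Gauss sum over the subring unit group $R_n^{*}$ with frequency $\operatorname{Tr}_{R_{2n}/R_n}\!\big(a\xi^{j}(1+px_j)\big)$; this Gauss sum equals $0$, $-p^n$, or $p^n(p^n-1)$ according as that frequency is a unit of $R_n$, a nonzero element of $\I_n$, or zero, and its reduction modulo $\I_n$ equals $\operatorname{tr}_{\F_{p^{2n}}/\F_{p^n}}(\bar a\bar\xi^{j})$, which lands in the special coset $\ker(\operatorname{tr}_{\F_{p^{2n}}/\F_{p^n}})\setminus\{0\}$ for exactly one $j$ in each window of $p^n$ consecutive indices.

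The hard part, I expect, will be the bookkeeping in this last case: one has to show that, as $i$ ranges over $0,\dots,p^n$, the \enquote{large} Gauss-sum value $p^n(p^n-1)$ --- which occurs exactly when $\operatorname{Tr}_{R_{2n}/R_n}\!\big(a\xi^{j}(1+px_j)\big)$ vanishes \emph{identically}, not merely modulo $\I_n$ --- is attained by exactly one of the relevant pairs $(i,j)$; squaring and summing the contributions then yields $v-k$, and in particular the answer turns out to be independent of the choice of the transversal $S$. To settle the count I would reduce the identical-vanishing condition, using the $\Z_{p^2}$-linearity of the relative trace, to the requirement that $\bar x_j$ lie on a prescribed affine line in $\F_{p^{2n}}$ whose direction space is $\theta^{-1}\ker(\operatorname{tr}_{\F_{p^{2n}}/\F_{p^n}})$ for a suitable nonzero $\theta\in\ker(\operatorname{tr}_{\F_{p^{2n}}/\F_{p^n}})$; because that kernel is a single $\F_{p^n}^{*}$-coset, the direction space is exactly the subfield $\F_{p^n}$, and any transversal of $\F_{p^{2n}}/\F_{p^n}$ meets each $\F_{p^n}$-coset in exactly one point, which gives the count $1$. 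This yields $\sum_{i}|\widehat{D_i}(\chi)|^{2}=v-k$ in all cases and completes the proof; alternatively, one could instead verify the near-complete external-difference-family property directly, paralleling the treatment of the finite-field construction in \cite{davis2017}.
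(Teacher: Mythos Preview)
The paper itself does not prove \autoref{th:momihara_3.1}; immediately after the statement it writes that \enquote{for the extensive and technical proof of this theorem the reader is referred to \textcite{momihara2017}.} So there is no in-paper argument to compare against directly. Judging from the paper's later use of \textcite[Lemmata~4--7]{momihara2017} in the proof of \autoref{lem:intersection_numbers_devD}, Momihara's original argument proceeds by a direct combinatorial decomposition of the multisets $\Delta D_i$ into the \enquote{Type~1--4} pieces, not via characters.

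Your character-sum route is therefore genuinely different, and it is sound. The partition argument in stage one is correct; in particular, the observation that the transversal condition on $1+pS$ for $\mathbb{P}_{2n}/\mathbb{P}_n$ is equivalent to $\{\bar x_j\}$ being a transversal of $\F_{p^{2n}}/\F_{p^n}$ is exactly what makes the final counting go through. In stage two both cases do give $v-k$: when $\chi$ is trivial on $\I_{2n}$ one gets $\widehat{D_i}(\chi)=-(p^{2n}-p^n+1)$ for a single $i$ and $p^n-1$ for the other $p^n$ values; when $\chi$ is nontrivial on $\I_{2n}$ one gets $p^n-1$ once (precisely for the $i$ at which the $P$-contribution is large and all $G_{i,j}$ vanish), $p^{2n}-p^n-1$ once (at your unique \enquote{large Gauss sum} pair), and $-(p^n+1)$ the remaining $p^n-1$ times. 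In both cases the squared moduli sum to $p^{4n}-p^{3n}+p^{2n}-p^n+1=v-k$. The only thing I would flag is that you assert these final sums rather than displaying them; the identities are short and should be written out. What your approach buys over Momihara's combinatorial argument is a uniform mechanism that makes the independence of the result from the choice of $S$ transparent; the cost is setting up the trace and character machinery for Galois rings, which the direct approach avoids.
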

	For the extensive and technical proof of this theorem the reader is referred to \textcite{momihara2017}. The author of \cite{momihara2017} mentions that, since $(p^{2n}+1)(p^n-1)$ divides $p^{4n}-1$, we can use \autoref{th:Fq_DDF} to construct a disjoint difference family $C = \{C_0, C_1, \dots, C_{p^n}\}$ with the same parameters as in \autoref{th:momihara_3.1} in the additive group of the finite field $\Fqfour$. According to \autoref{prop:dev_design} the developments $dev(C)$ and $dev(D)$ of the difference families form $2$\nobreakdash-designs with parameters $(v,k,\lambda) = \left(p^{4n}, (p^{2n}+1)(p^n-1), (p^{2n}+1)(p^n-1) - 1\right)$. The comparison of these two $2$-designs leads to our first main theorem:
	
	\begin{theorem}
	\label{th:designs_noniso}
		Let $C$ be a $\left(p^{4n}, (p^{2n}+1)(p^n-1), (p^{2n}+1)(p^n-1) - 1 \right)$ disjoint difference family in the additive group of the finite field $\F_{p^{4n}}$ constructed with \autoref{th:Fq_DDF}, and let $D$ be a disjoint difference families with the same parameters in the additive group of the Galois ring $\GR(p^2,2n)$ constructed with \autoref{th:momihara_3.1}. The $2$-$\left(v,k,k-1\right)$~designs $dev(C)$ and $dev(D)$ with parameters $v = p^{4n}$ and $k = (p^{2n}+1)(p^n-1)$ are nonisomorphic.
	\end{theorem}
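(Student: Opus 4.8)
The plan is to use that the multiset of pairwise block intersection numbers of a design is an isomorphism invariant: if $dev(C)\cong dev(D)$, then for every $s\ge 0$ the two designs contain equally many pairs of distinct blocks meeting in exactly $s$ points. Hence it suffices to find an integer realized as $|B\cap B'|$ for two distinct blocks in one of the designs but in no two distinct blocks of the other; I will show that $s=p^{n}-2$ works whenever $p^{n}\ge 3$. A first, routine reduction: for a design $dev(\{E_{1},\dots,E_{b}\})$ developed from a difference family in an abelian group $G$ with full-length orbits (which holds for $C$ and $D$, since $(v-1)/k=p^{n}+1$ equals the number of base blocks), $|(E_{i}+g)\cap(E_{j}+h)|$ equals the multiplicity of $h-g$ in the multiset $E_{i}-E_{j}$ for $i\ne j$, and in $\Delta E_{i}$ for $i=j$, $g\ne h$; so the block intersection numbers of $dev(\{E_{1},\dots,E_{b}\})$ are exactly the multiplicities occurring in the difference multisets of the base blocks.

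On the Galois ring side, I exhibit two distinct blocks of $dev(D)$ meeting in exactly $p^{n}-2$ points. Take the base block $D_{1}=\xi D_{0}$. Since $P=p\xi^{p^{n}}\T_{n}^{*}=\xi^{p^{n}}(\I_{n}\setminus\{0\})$ and $\xi^{p^{n}+1}$ generates $\T_{n}^{*}$, the ``ideal part'' of $D_{1}$ is $\xi P=\I_{n}\setminus\{0\}$, whereas $D_{1}\setminus(\I_{n}\setminus\{0\})=\bigcup_{j=0}^{p^{n}-1}\xi^{j+1}(1+px_{j})R_{n}^{*}$ consists of units of $R_{2n}$ (a unit of the subring $R_{n}$ is a unit of $R_{2n}$). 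Now translate by an arbitrary $t\in\I_{n}\setminus\{0\}$. Because $t$ lies in the maximal ideal, each unit of $D_{1}$ is moved to another unit, and $\I_{n}\setminus\{0\}$ is moved to $\I_{n}\setminus\{t\}$; hence $(D_{1}+t)\cap D_{1}$ is the disjoint union of $(\I_{n}\setminus\{t\})\cap(\I_{n}\setminus\{0\})=\I_{n}\setminus\{0,t\}$ and the intersection of the translated unit part of $D_{1}$ with the unit part of $D_{1}$. The latter is empty: a hypothetical coincidence, reduced modulo $\I_{2n}$ (where one works in $\F_{p^{2n}}$, the cosets $\xi^{j+1}(1+px_{j})R_{n}^{*}$ becoming the cosets $\overline{\xi}^{\,j+1}\F_{p^{n}}^{*}$), forces the two unit cosets containing the coinciding elements to share the same index $j$, and a further computation with the Teichmüller decomposition of $R_{n}\subseteq R_{2n}$ then forces $p^{n}+1\mid(j+1)$, which is impossible for $j\in\{0,\dots,p^{n}-1\}$. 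Therefore $|D_{1}\cap(D_{1}+t)|=|\I_{n}\setminus\{0,t\}|=p^{n}-2$, so $dev(D)$ realizes the intersection number $p^{n}-2$.

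On the cyclotomic side, multiplying through by the generator $\alpha^{p^{n}+1}$ of the subgroup $C_{0}$ shows that the multiplicity of $\alpha^{m}$ in $C_{i}-C_{j}$ depends only on $i,j,m$ modulo $e=p^{n}+1$ and equals the cyclotomic number $(j-m,\,i-m)_{e}$ of $\F_{p^{4n}}$ (and $\Delta C_{i}$ yields the diagonal cyclotomic numbers). Thus every block intersection number of $dev(C)$ is some cyclotomic number $(h,k)_{e}$ over $\F_{p^{4n}}$. The decisive structural fact is $p^{n}\equiv-1\pmod{e}$: this is the semi-primitive case, so all Gauss sums of characters of order dividing $e$ equal $-p^{2n}$ (with the usual separate verification when $p=2$), and the classical Gauss/Jacobi-sum evaluation of the $(h,k)_{e}$ gives $(h,k)_{e}\ge \frac{q}{e^{2}}-\frac{\sqrt{q}}{e}-O(1)>p^{n}-2$ for all $h,k$ once $p^{n}\ge 3$. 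Hence no two distinct blocks of $dev(C)$ meet in $p^{n}-2$ points.

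Combining the two sides, $p^{n}-2$ is a block intersection number of $dev(D)$ but not of $dev(C)$, so $dev(C)\not\cong dev(D)$ whenever $p^{n}\ge 3$; the single remaining case $(p,n)=(2,1)$, i.e., two $2$-$(16,5,4)$ designs, is settled by a direct finite comparison of the intersection-number multiplicities. I expect the main obstacle to be the Galois ring computation: one must keep precise track of the $p^{n}$ unit cosets $\xi^{j+1}(1+px_{j})R_{n}^{*}$ and of $P$ through the Teichmüller-group/principal-unit bookkeeping to be certain the translates behave exactly as claimed; a secondary technical point is making the semi-primitive Gauss-sum evaluation (especially in characteristic two) precise enough that the inequality $(h,k)_{e}>p^{n}-2$ is genuinely valid, including for the small values of $p^{n}$.
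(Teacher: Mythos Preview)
Your Galois ring computation that $|D_{1}\cap(D_{1}+t)|=p^{n}-2$ for $t\in\I_{n}\setminus\{0\}$ looks correct, but the strategy collapses on the finite field side: the claim that every cyclotomic number $(h,k)_{p^{n}+1}$ over $\F_{p^{4n}}$ exceeds $p^{n}-2$ is false. In fact $(0,0)_{p^{n}+1}=p^{n}-2$ \emph{exactly}. This is the semi-primitive (``uniform cyclotomy'') case of Baumert and Mills: with $e=p^{n}+1$, $q=p^{4n}$, $s=p^{2n}$ and $\eta=(s-1)/e=p^{n}-1$, one has $(0,0)_{e}=\eta^{2}-(e-3)\eta-1=p^{n}-2$, while the remaining cyclotomic numbers are $p^{n}(p^{n}-1)$ and $(p^{n}-1)^{2}$. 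So $p^{n}-2$ is a block intersection number of $dev(C)$ as well, and cannot separate the two designs. The lower bound you invoke, $(h,k)_{e}\ge q/e^{2}-\sqrt{q}/e-O(1)$, is not valid here: in the semi-primitive situation all nontrivial Gauss sums equal the same real number $\pm\sqrt{q}$, so roughly $e$ of them, not one, accumulate with the same sign in the Jacobi-sum expression for $(0,0)_{e}$, producing a deviation of order $e\cdot\sqrt{q}/e^{2}\sim p^{n}$ rather than $\sqrt{q}/e\sim p^{n}$; the exact evaluation shows the minimum is precisely $p^{n}-2$.

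The paper proceeds in the opposite direction. It first records, via uniform cyclotomy, that $dev(C)$ has \emph{only} the three intersection numbers $p^{n}-2$, $p^{n}(p^{n}-1)$, $(p^{n}-1)^{2}$, and then exhibits in $dev(D)$ the intersection number $(2p^{n}-1)(p^{n}-2)$ by counting, for $u\in U\subseteq D_{0}$, the multiplicity of $u$ in $\Delta D_{0}$ across the four types of differences coming from the decomposition $D_{0}=P\cup\bigcup_{j}\xi^{j}(1+px_{j})R_{n}^{*}$; for $p^{n}>2$ this number is distinct from all three cyclotomic values. The residual case $p^{n}=2$ is handled by comparing automorphism group orders, not intersection multiplicities. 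If you want to salvage your approach you would have to compare \emph{multiplicities} of the value $p^{n}-2$ in the two designs rather than its mere occurrence, which is a substantially harder count; alternatively, redo the $dev(D)$ side to produce an intersection number outside $\{p^{n}-2,\ p^{n}(p^{n}-1),\ (p^{n}-1)^{2}\}$.
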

	
	There are various ways of isomorphism testing for combinatorial designs. One popular approach is to study the ranks of their incidence matrices. However, in our case this yields no valid results. We were more successful examining the so-called block intersection numbers of both $2$-designs. We will prove \autoref{th:designs_noniso} by showing that the block intersection numbers of $dev(C)$ and $dev(D)$ differ. The \emph{block intersection numbers} of a $t$-design are the cardinalities $\left| B_i \cap B_j \right|$ of the intersections of two distinct blocks $B_i, B_j$ of the design.
	
	\begin{remark}
	\label{remark:intersectionNumbers_calc}
		Block intersection numbers can be easily calculated in the following way: Let $M$ denote the incidence matrix of a $t$-design with the rows of $M$ corresponding to the points and the columns of $M$ corresponding to the blocks of the design. The entry $(i,j)$ of the matrix $M^T M$ is exactly $|B_i \cap B_j|$.
	\end{remark}
	
	Block intersection numbers of combinatorial designs are invariant under isomorphism. So, to prove that our designs are nonisomorphic it is sufficient to show that $dev(D)$ has one block intersection number different from the block intersection numbers of $dev(C)$.
	
	We will first calculate all the intersection numbers of $dev(C)$. These are given as the so-called cyclotomic numbers: Analogously to \autoref{sec:DDF_Fq}, let $C_0, C_1, \dots, C_{e-1}$ be the cosets of the subgroup $C_0$ of the $e$-th powers in $\Fq^*$. For fixed non-negative integers $i,j \le e-1$ the \emph{cyclotomic number $(i,j)_e$ of order $e$} is defined as
	\begin{equation*}
	\label{eq:cyclo_number}
		(i,j)_e = |(C_i + 1) \cap C_j|.
	\end{equation*}
	In general, it is a hard number theoretic problem to calculate these cyclotomic numbers. However, \textcite{baumert1982} proved that in special cases they are easy to calculate:
	
	\begin{proposition}[{\cite[Theorems~1 and~4]{baumert1982}}]
	\label{prop:uniform_cyclotomy}
		Let $p$ be a prime, and let $e \ge 3$ be a divisor of $p^m-1$ for a positive integer $m$. If $-1$ is a power of $p$ modulo $e$, then either $p = 2$ or $f = (p^m-1)/e$ is even, $p^m = s^2$ and $s \equiv 1 \pmod e$, and the cyclotomic numbers of order $e$ are given as
		\begin{align}
		\label{eq:uniform_cyclotomy}
			(0,0)_e &= \eta^2 - (e-3)\eta - 1, && \nonumber\\
			(0,i)_e = (i,0)_e = (i,i)_e &= \eta^2 + \eta &&\textrm{for } i \ne 0,\\
			(i,j)_e &= \eta^2 &&\textrm{for } i \ne j \textrm{ and } i,j \ne 0, \nonumber
		\end{align}
		where $\eta = (s-1)/e$.
	\end{proposition}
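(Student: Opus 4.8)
The plan is to compute the cyclotomic numbers from Gauss sums, the decisive input being that the hypothesis (``$-1$ is a power of $p$ modulo $e$'') is exactly the classical \emph{semi-primitive} condition, under which all the relevant Jacobi sums are ``pure''. Throughout write $q=p^{m}$ and fix a multiplicative character $\chi$ of $\F_{q}$ of exact order $e$. For $x\in\F_{q}^{*}$ one has the orthogonality identity $[x\in C_i]=\frac1e\sum_{a=0}^{e-1}\overline{\chi}^{\,a}(\alpha^{i})\,\chi^{a}(x)$, so that
\[
  (i,j)_e=\frac1{e^{2}}\sum_{a,b=0}^{e-1}\overline{\chi}^{\,a}(\alpha^{i})\,\overline{\chi}^{\,b}(\alpha^{j})\,S_{a,b},
  \qquad
  S_{a,b}:=\sum_{x\in\F_{q}\setminus\{0,-1\}}\chi^{a}(x)\,\chi^{b}(x+1).
\]
Here $S_{0,0}=q-2$, while $S_{a,0}=S_{0,b}=-1$ for $a,b\not\equiv0\pmod e$ (orthogonality, using $\chi(-1)=1$, established below), and $S_{a,-a}=\sum_{y\in\F_{q}\setminus\{0,1\}}\chi^{a}(y)=-1$ for $a\not\equiv0$; and for $a,b,a+b$ all $\not\equiv0$ the sum $S_{a,b}$ is a Jacobi sum, $S_{a,b}=g(\chi^{a})g(\chi^{b})/g(\chi^{a+b})$. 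So the whole computation reduces to evaluating these generic Jacobi sums.

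The key lemma I would prove is: under the hypothesis, $S_{a,b}=-s$ for all $a,b,a+b\not\equiv0\pmod e$, where $s$ is the square root of $q$ with $s\equiv1\pmod e$. Let $k$ be minimal with $p^{k}\equiv-1\pmod e$; then $\operatorname{ord}_e(p)=2k$, hence $2k\mid m$, say $m=2kt$, and $\sigma:=\operatorname{Frob}_{p}^{\,k}$ is a field automorphism of $\F_{q}$ which sends $C_i$ to $C_{-i}$ (because $p^{k}\equiv-1$) and fixes $-1$; since $\sigma(-1)=-1$ lies simultaneously in $C_{\operatorname{ind}(-1)}$ and $C_{-\operatorname{ind}(-1)}$, the index of $-1$ is $0$ or $e/2$, and the possibility $e/2$ is excluded by a $2$-adic valuation estimate ($v_2(q-1)=v_2(p^{k}-1)+v_2(p^{k}+1)+v_2(t)>v_2(p^{k}+1)\ge v_2(e)$ when $p$ is odd; trivial for $p=2$). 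Hence $-1\in C_0$, which is the assertion ``$f$ even'' for $p$ odd and gives $\chi^{a}(-1)=1$ for all $a$; moreover $q=(p^{m/2})^{2}$, and with $s:=(-1)^{t}p^{m/2}$ one checks $s^{2}=q$ and $s\equiv(-1)^{t}(-1)^{t}=1\pmod e$, so $\eta=(s-1)/e\in\Z$.

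Next I would show each nontrivial Gauss sum is pure: if $\psi$ has order $d$ with $1<d\mid e$, then $d\mid p^{k}+1$, so $\psi^{p^{k}}=\psi^{-1}=\overline{\psi}$; together with the invariance $g(\psi^{p})=g(\psi)$ this gives $g(\overline{\psi})=g(\psi^{p^{k}})=g(\psi)$, and combined with $g(\psi)\,g(\overline{\psi})=\psi(-1)\,q=q$ we get $g(\psi)^{2}=q$, hence $g(\psi)=\pm\sqrt{q}$. Consequently each generic $S_{a,b}=g(\chi^{a})g(\chi^{b})/g(\chi^{a+b})$ equals $\pm\sqrt{q}$; that the sign is always $-(-1)^{t}$, i.e.\ $S_{a,b}=-s$, I would pin down via the Davenport--Hasse lifting relation, reducing to $\F_{p^{2k}}$, where the classical evaluation of Gauss sums of characters trivial on $\F_{p^{k}}^{*}$ (equivalently Stickelberger's congruence) provides the sign; note that although individual Gauss sums $g(\chi^{a})$ may differ by the twist $(-1)^{a}$, this twist cancels in the Jacobi sums, so the value $-s$ is uniform. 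I expect this sign bookkeeping — the semi-primitive case of Stickelberger / Davenport--Hasse, plus the minor differences in characteristic $2$ — to be the main obstacle.

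Finally I would substitute everything back. Writing $s=e\eta+1$ and $q=s^{2}$, and inserting $S_{0,0}=q-2$, $S_{a,0}=S_{0,b}=S_{a,-a}=-1$, and $S_{a,b}=-s$ for the generic terms, the double sum collapses via $\sum_{a}\overline{\chi}^{\,a}(\alpha^{i})=e\,[\,i\equiv0\pmod e\,]$ together with inclusion--exclusion over the degenerate index sets $\{a\equiv0\}$, $\{b\equiv0\}$, $\{a+b\equiv0\}$; after collecting powers of $\eta$ one obtains exactly $(0,0)_e=\eta^{2}-(e-3)\eta-1$, $(0,i)_e=(i,0)_e=(i,i)_e=\eta^{2}+\eta$ for $i\not\equiv0$, and $(i,j)_e=\eta^{2}$ for $i\not\equiv j$ with $i,j\not\equiv0$. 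A useful cross-check along the way is $\sum_{j}(i,j)_e=e\eta^{2}+2\eta=(s-1)(s+1)/e=(q-1)/e=f$ (one less when $i\equiv0$, accounting for $x=-1\in C_0$), which matches the direct count $|C_i|-[\,-1\in C_i\,]$.
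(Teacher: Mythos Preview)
The paper does not give its own proof of this proposition: it is quoted verbatim from Baumert--Mills--Ward \cite{baumert1982} and used as a black box. So there is nothing in the paper to compare your argument against.

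That said, your outline is essentially the Baumert--Mills--Ward argument itself: express $(i,j)_e$ through characters, reduce the generic terms to Jacobi sums, use the semi-primitive hypothesis $p^{k}\equiv-1\pmod e$ to force $g(\psi)^{2}=q$ for every nontrivial $\psi$ of order dividing $e$, and then resolve the sign. Your identification of the sign determination (via Davenport--Hasse lifting down to $\F_{p^{2k}}$ and Stickelberger) as the only nontrivial step is accurate, and your remark that the possible per-character sign twist cancels in the quotient $g(\chi^{a})g(\chi^{b})/g(\chi^{a+b})$ is the right observation for getting a \emph{uniform} value $-s$ for all generic $(a,b)$. The parity argument for $-1\in C_0$ (equivalently $f$ even when $p$ is odd) is correct in spirit; the $2$-adic bookkeeping you sketch can be made precise by writing $q-1=(p^{k}-1)\cdot\frac{p^{k}+1}{e}\cdot e\cdot\frac{p^{2kt}-1}{p^{2k}-1}$ and noting that for odd $p$ at least one of the first two factors is even. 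Your final cross-check $\sum_{j}(i,j)_e=f-[\,i\equiv0\,]$ is a good sanity test and comes out correctly with the stated values.
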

	
	Because there exist only three distinct cyclotomic numbers in the described case, the authors of \cite{baumert1982} speak of \emph{uniform} cyclotomic numbers. Applying \autoref{prop:uniform_cyclotomy} to $dev(D)$ leads to
	
	\begin{corollary}
	\label{cor:intersection_numbers_devC}
		Let $C$ be a $\left(p^{4n}, (p^{2n}+1)(p^n-1), (p^{2n}+1)(p^n-1) - 1 \right)$ disjoint difference family in the additive group of the finite field $\F_{p^{4n}}$ constructed with \autoref{th:Fq_DDF}. The $2$-$(p^{4n}, (p^{2n}+1)(p^n-1), (p^{2n}+1)(p^n-1) - 1)$ design $dev(C)$ has exactly three block intersection numbers, namely $p^n-2$, $p^n(p^n-1)$ and $(p^n-1)^2$.
	\end{corollary}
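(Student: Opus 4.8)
The plan is to identify the block intersection numbers of $dev(C)$ with the cyclotomic numbers of order $e := p^n+1$ and then to evaluate these via \autoref{prop:uniform_cyclotomy}. Set $q = p^{4n}$ and $f = k = (p^{2n}+1)(p^n-1)$; then the subgroup $C_0$ of $e$-th powers in $\Fq^*$ has index $e = (q-1)/f = (p^{4n}-1)/((p^{2n}+1)(p^n-1)) = p^n+1$, so $C = \{C_0,\dots,C_{e-1}\}$ is exactly the family produced by \autoref{th:Fq_DDF} for this value of $e$ (it has $p^n+1$ members, as required).

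First I would reduce the intersection numbers to cyclotomic numbers. An arbitrary pair of distinct blocks of $dev(C)$ has the form $C_i+g$, $C_j+h$, and $|(C_i+g)\cap(C_j+h)| = |C_i\cap(C_j+d)|$ for $d := h-g$. If $d = 0$ then $i \ne j$, and this intersection is $0$ because $C_0,\dots,C_{e-1}$ partition $\Fq^*$. If $d \ne 0$, let $C_\ell$ be the coset containing $d$; multiplication by $d^{-1}$ is a bijection of $\Fq$ that permutes the cosets, sending $C_i$ to $C_{i-\ell}$ and $C_j+d$ to $C_{j-\ell}+1$ (indices taken modulo $e$), so
\[
	|C_i\cap(C_j+d)| = |(C_{j-\ell}+1)\cap C_{i-\ell}| = (j-\ell,\,i-\ell)_e .
\]
As $i,j,\ell$ vary, and using that each coset $C_\ell$ has more than one element (so that distinct blocks carrying the prescribed labels always exist), every cyclotomic number $(a,b)_e$ is attained. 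Hence the nonzero block intersection numbers of $dev(C)$ are precisely the cyclotomic numbers of order $e$, and $(0,0)_e$ itself collapses to the value $0$ exactly in the smallest case $p^n = 2$.

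It remains to evaluate the cyclotomic numbers of order $e$, which is where \autoref{prop:uniform_cyclotomy} enters. Here $e = p^n+1 \ge 3$ divides $p^m-1$ with $m = 4n$, and $p^n \equiv -1 \pmod e$, so $-1$ is a power of $p$ modulo $e$; the dichotomy in that proposition holds because either $p = 2$ or $f = (p^n-1)(p^{2n}+1)$ is even (as $p^n-1$ is), and moreover $p^m = (p^{2n})^2 = s^2$ with $s = p^{2n}$ and $s \equiv (p^n)^2 \equiv 1 \pmod e$. Thus the cyclotomic numbers of order $e$ are uniform with $\eta = (s-1)/e = (p^{2n}-1)/(p^n+1) = p^n-1$. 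Substituting $\eta = p^n-1$ and $e = p^n+1$ into the formulas of \autoref{prop:uniform_cyclotomy} gives $(0,0)_e = \eta^2-(e-3)\eta-1 = p^n-2$, then $(0,i)_e = (i,0)_e = (i,i)_e = \eta^2+\eta = p^n(p^n-1)$ for $i \ne 0$, and $(i,j)_e = \eta^2 = (p^n-1)^2$ for distinct nonzero $i,j$; a short computation shows these three quantities are pairwise distinct for every $p^n \ge 2$, which finishes the proof.

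The arithmetic and the verification of the hypotheses of \autoref{prop:uniform_cyclotomy} are routine. The one step requiring care is the reduction: one must argue cleanly that multiplication by a nonzero element of $\Fq$ permutes the blocks of $dev(C)$ in the way claimed, that this pins down each intersection number as a single well-defined cyclotomic number, and that conversely every cyclotomic number of order $e$ occurs, so that the a priori unbounded multiset of block intersection numbers of $dev(C)$ collapses to the finitely many cyclotomic numbers of order $e$.
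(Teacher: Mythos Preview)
Your proof is correct and follows the same approach as the paper: verify the hypotheses of \autoref{prop:uniform_cyclotomy} with $e=p^n+1$, $s=p^{2n}$, $\eta=p^n-1$, and read off the three cyclotomic numbers. You are more thorough than the paper in one respect: the paper simply asserts that the block intersection numbers of $dev(C)$ are the cyclotomic numbers of order $e$, whereas you actually carry out the reduction (translate so one block is a coset, then multiply by $d^{-1}$), and in doing so you notice the $d=0$ case, which produces the additional intersection value $0$ whenever $p^n>2$; the paper's statement glosses over this, but it is harmless for the application to \autoref{th:designs_noniso}.
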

	\begin{proof}
		We show that $C$ meets the conditions of \autoref{prop:uniform_cyclotomy}: In our case $e = p^n + 1$. So $-1$ is the $n$-th power of $p$ modulo $e$. Moreover, from $p^{4n} = s^2$ it follows that $s = p^{2n} \equiv 1 \pmod e$, so $\eta = (p^{2n}-1)/(p^n+1) = p^n-1$. By \cref{eq:uniform_cyclotomy} we obtain the cyclotomic numbers
		\begin{align*}
			(0,0)_{p^n+1} &= p^n-2, \nonumber\\
			(0,i)_{p^n+1} = (i,0)_{p^n+1} = (i,i)_{p^n+1} &= p^n(p^n-1) &&\textrm{for } i \ne 0,\\
			(i,j)_{p^n+1} &= (p^n-1)^2 &&\textrm{for } i \ne j \textrm{ and } i,j \ne 0, \nonumber
		\end{align*}
		that occur as the intersection numbers of $dev(C)$.
	\end{proof}
	
	The next step will be to show that there is a block intersection number in $dev(D)$ that does not occur in $dev(C)$.
	
	\begin{lemma}
		\label{lem:intersection_numbers_devD}
		Let $D$ be a $\left(p^{4n}, (p^{2n}+1)(p^n-1), (p^{2n}+1)(p^n-1) - 1 \right)$ disjoint difference family in the additive group of $\GR(p^2,2n)$ constructed with \autoref{th:momihara_3.1}. The $2$-$(p^{4n}, (p^{2n}+1)(p^n-1), (p^{2n}+1)(p^n-1) - 1)$ design $dev(D)$ has a block intersection number $(2p^n-1)(p^n-2)$.
	\end{lemma}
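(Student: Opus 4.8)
The plan is to convert the statement into one about difference multiplicities and then to compute one such multiplicity by hand. Since $dev(D)$ has base blocks $D_0,\dots,D_{p^n}$, its block intersection numbers are exactly the nonzero values occurring as the multiplicity of some $t\neq 0$ in a multiset $\Delta D_i$, or as the multiplicity of some $t$ in a multiset $D_i-D_j$ with $i\neq j$; this follows because $\bigl|(D_i+g)\cap(D_j+h)\bigr|$ equals the multiplicity of $h-g$ in $\Delta D_i$ when $i=j$ and in $D_i-D_j$ when $i\neq j$, together with \autoref{remark:intersectionNumbers_calc}. Because $D_i=\xi^iD_0$ and multiplication by $\xi^i$ is an automorphism of $(R_{2n},+)$, it is enough to produce $(2p^n-1)(p^n-2)$ as the multiplicity of a single element $e$ in $\Delta D_0$. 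I will take $e=\xi^{j_0}(1+px_{j_0})$ for an arbitrary fixed $j_0\in\{0,\dots,p^n-1\}$; this is a unit, it lies in the set $U_{j_0}:=\xi^{j_0}(1+px_{j_0})R_n^*$, and in no other $U_j:=\xi^j(1+px_j)R_n^*$.

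First I would record the structure needed: $D_0=P\cup\bigcup_{j=0}^{p^n-1}U_j$ as a disjoint union, where $P=\xi^{p^n}\I_n\setminus\{0\}$ sits inside the ideal $\I_{2n}$; the sets $\xi^j\I_n$ for $j=0,\dots,p^n$ are the $p^n+1$ distinct $\F_{p^n}$-lines of $\I_{2n}\cong\F_{p^{2n}}$ (the one of index $p^n$ containing $P$); the residues $\bar x_0,\dots,\bar x_{p^n-1}$ form a transversal of $\F_{p^n}$ in $\F_{p^{2n}}$; and distinct $j,j'\in\{0,\dots,p^n-1\}$ satisfy $(p^n+1)\nmid(j-j')$, whence $\bar\xi^{j-j'}\notin\F_{p^n}$. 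Expanding
\[
\Delta D_0=\Delta P\uplus\biguplus_{j}\Delta U_j\uplus\biguplus_{j\neq j'}(U_j-U_{j'})\uplus\biguplus_j(P-U_j)\uplus\biguplus_j(U_j-P),
\]
I would check that $e$ has multiplicity $0$ in every summand except $\Delta U_{j_0}$ and the terms $U_j-U_{j'}$ with $j,j'\neq j_0$. Indeed $\mathrm{mult}(e,\Delta P)=0$ since $e$ is a unit and $\Delta P\subseteq\I_{2n}$; $\mathrm{mult}(e,\Delta U_j)=0$ for $j\neq j_0$ since $\Delta U_j$ meets $R_{2n}^*$ only inside $U_j$; and for the mixed terms $P-U_j$, $U_j-P$ and for the terms $U_j-U_{j'}$ in which exactly one index equals $j_0$, solving the defining equation for one of the unknown units and reducing modulo $\I_{2n}$ forces that unit to lie in a coset of $R_n^*$ (or a line of $\I_{2n}$) disjoint from $R_n$ — the obstruction being that a power $\xi^m$ with $(p^n+1)\nmid m$ would have to lie in $R_n$. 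Hence
\[
\mathrm{mult}(e,\Delta D_0)=\mathrm{mult}(e,\Delta U_{j_0})+\sum_{\substack{j,j'\in\{0,\dots,p^n-1\}\setminus\{j_0\}\\ j\neq j'}}\mathrm{mult}(e,U_j-U_{j'}).
\]

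For the first surviving term, $\mathrm{mult}(e,\Delta U_{j_0})=\#\{u\in R_n^*:u-1\in R_n^*\}$, and since every element of the coset $1+\I_n$ is a unit of $R_n$ this equals $|R_n^*|-|1+\I_n|=p^n(p^n-1)-p^n=p^n(p^n-2)$. The decisive term is the second one: I claim each pair $(j,j')$ with $j,j'\in\{0,\dots,p^n-1\}\setminus\{j_0\}$, $j\neq j'$, contributes $\mathrm{mult}(e,U_j-U_{j'})=1$. Reducing the equation $\xi^j(1+px_j)u-\xi^{j'}(1+px_{j'})u'=e$ modulo $\I_{2n}$ pins down $\bar u,\bar u'$ uniquely and makes them nonzero (because $\bar\xi^{j_0}$ lies on neither line $\bar\xi^j\F_{p^n}$, $\bar\xi^{j'}\F_{p^n}$), hence fixes the Teichmüller parts $u_0,u_0'$ of $u,u'$; writing $u=u_0(1+p\tau)$, $u'=u_0'(1+p\tau')$ with $\tau,\tau'\in\T_n$ free, the remaining equation has all terms divisible by $p$ and is equivalent to a linear equation $\bar\xi^j\bar u_0\,\bar\tau-\bar\xi^{j'}\bar u_0'\,\bar\tau'=\kappa$ over $\F_{p^{2n}}$ in the residues $\bar\tau,\bar\tau'\in\F_{p^n}$, whose left-hand side, as a map $\F_{p^n}^2\to\F_{p^{2n}}$, is an isomorphism because the two lines are complementary — so $\kappa$ has a unique preimage. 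Summing the contribution $1$ over the $(p^n-1)(p^n-2)$ such pairs and adding $p^n(p^n-2)$ gives $\mathrm{mult}(e,\Delta D_0)=(2p^n-1)(p^n-2)$, so this is a block intersection number of $dev(D)$.

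The bulk of the work — and the part that must be written out carefully rather than asserted — is the case analysis showing that the non-surviving multiplicities all vanish; each case reduces to linear algebra over $\F_{p^n}$ inside $\F_{p^{2n}}=R_{2n}/\I_{2n}$, but there are several configurations to organize. As a sanity check on the method, one can run the same bookkeeping for an arbitrary nonzero $t\in\I_{2n}$ and recover only the three values $p^n-2$, $p^n(p^n-1)$ and $(p^n-1)^2$ that, by \autoref{cor:intersection_numbers_devC}, are the intersection numbers of $dev(C)$ — which is precisely why the new intersection number has to come from a unit.
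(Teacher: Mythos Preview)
Your proposal is correct and follows essentially the same route as the paper: both compute the multiplicity of a fixed element of $U=\bigcup_j \xi^j(1+px_j)R_n^*$ in $\Delta D_0$ by splitting $\Delta D_0$ into the same four types of pieces ($\Delta P$, the internal $\Delta U_j$, the cross terms $U_j-U_{j'}$, and the mixed terms with $P$), obtain the same two nonzero contributions $p^n(p^n-2)$ and $(p^n-1)(p^n-2)$, and verify the remaining pieces vanish.

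The one notable difference is methodological rather than structural. For the cross terms $U_j-U_{j'}$ the paper invokes Momihara's \autoref{lem:momihara_3.3} to identify the multiset with the set $R_{2n}^*\setminus(\xi^jV\cup\xi^{j'}V)$ and then simply reads off which pairs $(j,j')$ avoid the class of $e$; you instead give a self-contained two-step linear-algebra argument over $\F_{p^{2n}}=\bar\xi^{\,j}\F_{p^n}\oplus\bar\xi^{\,j'}\F_{p^n}$ that pins down first the Teichm\"uller parts and then the principal-unit parts of the solution. Your route avoids the external citation and makes the ``multiplicity exactly $1$'' visible, at the cost of a longer case analysis; the paper's route is shorter once the lemma is in hand and also yields the full description of the multiset. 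Similarly, for $U_j-P$ the paper computes the set explicitly as $\xi^j\bigl(V\setminus(1+px_j)R_n^*\bigr)$, whereas you only sketch the obstruction modulo $\I_{2n}$; your sketch is correct, but as you note it needs the $j=j_0$ subcase written out (there the reduction gives $\bar u=-1$, and the contradiction appears only at the $p$-level, forcing $\bar\xi^{\,p^n-j_0}\in\F_{p^n}$).
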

	\begin{proof}
		This proof has a similar structure to the proofs by \textcite[Lemmata~4--7]{momihara2017}, but unlike \textcite{momihara2017}, we will not consider all the sets $D_0, D_1, \dots, D_b$ of the difference family, but only $D_0$. This requires a more detailed analysis of the intersection relations.
		
		Let, like above, $\xi$ be a generator of the Teichmüller group, $x_j \in S$, where $1+pS$ is a system of representatives of $\mathbb{P}_{2n}/\mathbb{P}_n$, and $R_n^*$ denote the unit group of the subring $R_n = \GR(p^2, n)$. Furthermore, define subsets $U$ and $V$ of $R_{2n}^*$ as 
		\begin{align*}
			U = \bigcup_{j=0}^{p^n-1} \xi^j (1+px_j)R_n^* \qquad \mbox{and} \qquad V = \bigcup_{j=0}^{p^n-1} (1+px_j)R_n^*.
		\end{align*}
		Note that $D_i = \xi^i(P \cup U)$ and that $V =  \T_n^* \times \mathbb{P}_{2n}$ and $\bigcup_{j=0}^{p^n} \xi^j V = R_{2n}^*$. We will prove \autoref{lem:intersection_numbers_devD} by showing that the block intersection number $| (D_0+u) \cap D_0 |$ of the block $D_0$ and its translate $D_0+u$ equals $(2p^n-1)(p^n-2)$ for all $u \in U$. The above statement is equivalent to: An arbitrary element $u \in U$ occurs exactly $(2p^n-1)(p^n-2)$ times in the multiset $\Delta D_0$. Looking at the structure of 
		\[
			D_0 = P \cup U = P \cup \left( \bigcup_{j = 0}^{p^n-1} \xi^j (1+px_j)R_n^*\right),
		\]
		we can split all the differences in this multiset into four different types. Let $0 \le s,t \le p^n-1$ and $s \ne t$. We have differences of
		
		\begin{enumerate}[label=\emph{Type \arabic*:}, ref=type~\arabic*, leftmargin=*, labelsep=2em]
			\item $\xi^s (1+px_s)R_n^* - \xi^t (1+px_t)R_n^*$, \label{enum:1}
			\item $\Delta\left(\xi^s (1+px_s)R_n^*\right)$, \label{enum:2}
			\item $\xi^s (1+px_s)R_n^* - P$, \label{enum:3}
			\item $\Delta P$. \label{enum:4}	
		\end{enumerate}
		
		Now, let $u$ be a fixed element of $U$. We will count the number of occurrences of $u$ in $\Delta D_0$ by counting its occurrences in each of the four multisets defined above. Before we start we state the following useful lemma which will show that it does not matter whether we look at the differences or the sums in \ref{enum:1}--\ref{enum:4}:
		
		\begin{lemma}
		\label{lem:-1_GR}
			Consider the Galois Ring $\GR(p^m, r)$. If $p$ is odd, then $-1$ is an element of the Teichmüller group $\T^*$. If $p = 2$, then $-1$ is a principal unit.
		\end{lemma}
		\begin{proof}
			Let $p$ be odd. Then, the group of principal units $\mathbb{P}$ is a direct product of $r$ cyclic groups, each of odd order $p^{m-1}$, and the Teichmüller group $\T^*$ has even order $p^r-1$. Consequently, there are only two second roots of unity in $\GR(p^m,r)$: $1$ and $-1$. Hence, $-1 = \xi^{(p^{r}-1)/2} \in \T^*$. If $p=2$, all the even integers of $\Z_{p^m} \subseteq \GR(p^m,r)$ are elements of the maximal ideal $\I = 2R$. Since $-1$ is odd, it follows that $-1 \in \mathbb{P} = 1 + \I$.
		\end{proof}
		
		In our case, $m=2$ and $r = 2n$, it follows from \autoref{lem:-1_GR} that if $p$ is odd, $-1 = \xi^{(p^{2n}-1)/2} = \xi^{(p^n-1)(p^n+1)/2}$, and thus $-1$ is included in the Teichmüller group $\T_n^* = \{\xi^{i(p^n+1)} : i = 0,1,\dots,p^n-2\}$ of the subring $R_n$. Because $\T_n^* \subseteq R_n^*$ and $P = p \xi^{p^n} \T_n^*$, we have 
		\begin{align}
		\label{eq:-R=R_-P=P}
			R_n^* = -R_n^*, \qquad\textrm{and}\qquad P = -P.
		\end{align}
		If $p = 2$, we have $-1 = 3 = 1\cdot(1+2\cdot1) \in R_n^*$, since clearly $1 \in \T_n^*$. Furthermore, since $-2 = 2$, all the elements of $\I_{2n} = 2R_{2n}$ (and consequently of $P \subseteq \I_{2n}$) are self inverse. Thus, \cref{eq:-R=R_-P=P} holds in this case as well.
		Now we start our proof by analyzing differences of  \ref{enum:1}, and we first state a helpful lemma \cite{momihara2017}:
		
		\begin{lemma}[{\cite[Lemma~3]{momihara2017}}]
		\label{lem:momihara_3.3}
			Let $a$ be an integer, $0 \le a \le p^n-2$, let $b$ be an element of $R_{2n}$, and let $V$ be as defined above. If $\xi^a(1+pb) \notin R_n^*$ and $\xi^a \notin \T_n$, then
			\begin{equation*}
				R_n^* + \xi^a(1+pb)R_n^* = R_{2n}^* \setminus \left( V \cup \xi^a V\right).
			\end{equation*}
		\end{lemma}
		
		Now, let $0 \le s,t \le p^n-1, s \ne t$ be fixed. By \cref{eq:-R=R_-P=P} we have
		\begin{align*}
			\xi^s (1+px_s)R_n^* - \xi^t (1+px_t)R_n^* &= \xi^s (1+px_s)R_n^* + \xi^t (1+px_t)R_n^*.
		\end{align*}
		We factor out $\xi^s(1+px_s)$ and obtain
		\begin{align*}
			\xi^s(1+px_s) \left(R_n^* + \xi^{t-s} \left(1+p(x_t-x_s)\right)R_n^*\right).
		\end{align*}
		Applying \autoref{lem:momihara_3.3}, this equals
		\begin{align*}
			\xi^s(1+px_s) \left(R_{2n}^* \setminus \left( V \cup \xi^{t-s} V\right)\right).
		\end{align*}
		Since $(1+px)V = V$ for any $x \in R_{2n}$, the factor $(1+px_s)$ can be omitted, and we obtain the result
		\begin{align}
		\label{eq:type1}
			\xi^s (1+px_s)R_n^* - \xi^t (1+px_t)R_n^* = R_{2n}^* \setminus \left( \xi^s V \cup \xi^t V \right).
		\end{align}
		Now, we are able to count differences: The set $D_0$ contains $p^n$ distinct subsets of the type $\xi^s(1+px_s)R_n^*$. Consequently, we have $p^n(p^n-1)$ \ref{enum:1} multisets. Since $0 \le s,t \le p^n-1$, the elements of $\xi^{p^n} V$ are, according to \cref{eq:type1}, contained in each of these multisets, whereas the elements of $\bigcup_{j=0}^{p^n-1} \xi^j V = R_{2n}^* \setminus \xi^{p^n}V$ occur in only $(p^n-2)(p^n-1)$ of them. Since $U$ is a subset of $R_{2n}^* \setminus \xi^{p^n}V$, we count, so far, $(p^n-2)(p^n-1)$ occurrences of our element $u$.
		
		In the next step, we will address differences of \ref{enum:2}. We will argue similarly to \textcite[Lemma~6]{momihara2017}. Let $s$ be a fixed integer, $0 \le s \le p^n-1$. We consider the set
		\begin{align*}
			\Delta\left(\xi^s (1+px_s)R_n^*\right) = \left\lbrace \xi^s(1+px_s)a - \xi^s(1+px_s)b : a,b \in R_n^*, a \ne b \right\rbrace.
		\end{align*}
		Since $a,b \in R_n^*$, we write $a = \xi^{a_1}(1+pa_2)$ and $b = \xi^{b_1}(1+pb_2)$, where $a_1, b_1 \in \{j(p^n+1) : j = 0,1,\dots,p^n-2\}$, $a_2,b_2 \in \T_n$, and $(a_1,a_2) \ne (b_1, b_2)$. We will consider two cases. First, if $a_1 = b_1$, we have
		\begin{align}
		\label{eq:a1=b1_1}
			\xi^s(1+px_s)\xi^{a_1}(1+pa_2) - \xi^s(1+px_s)\xi^{a_1}(1+pb_2).
		\end{align}
		We factor out $\xi^{s+a_1}$ and multiply the elements of $\mathbb{P}_n$:
		\begin{align*}
			\xi^{s+a_1} \left(1+p \left(x_s + a_2\right) - \left(1+p \left(x_s+b_2\right)\right)\right).
		\end{align*}
		We summarize this expression and obtain
		\begin{align*}
			\xi^{s+a_1}p(a_2-b_2),
		\end{align*}
		which is clearly an element of the maximal ideal $\I_{2n}$. Thus, the case $a_1 = b_1$ leads to no additional differences representing the unit $u$. Now, let $a_1 \ne b_1$. Instead of \cref{eq:a1=b1_1}, we now have
		\begin{align*}
			\xi^s(1+px_s)\xi^{a_1}(1+pa_2) - \xi^s(1+px_s)\xi^{b_1}(1+pb_2).
		\end{align*}
		Factoring out $\xi^s(1+px_s)$ yields
		\begin{align*}
			\xi^s(1+px_s) \left(\xi^{a_1}(1+pa_2) - \xi^{b_1}(1+pb_2)\right),
		\end{align*}
		which equals
		\begin{align*}
			\xi^s(1+px_s) \left(\xi^{a_1}-\xi^{b_1} + p(\xi^{a_1}a_2 - \xi^{b_1}b_2)\right).
		\end{align*}
		Because $\xi^{a_1}-\xi^{b_1} \ne 0$, and $\xi^{a_1}, \xi^{b_1}, a_2, b_2 \in R_n$, the terms $\xi^{a_1}-\xi^{b_1} + p(\xi^{a_1}a_2 - \xi^{b_1}b_2)$ represent the elements of $R_n^*=R_n \setminus \I_n$. The unit group $R_n^*$ contains $p^n(p^n-1)$ elements, and we can choose $a_1, a_2, b_1, b_2$ in $p^{2n}(p^n-1)(p^n-2)$ different ways. The differences are evenly distributed, which means that our element $u$ has $p^n(p^n-2)$ distinct representations of \ref{enum:2}.
		
		By adding our numbers of \ref{enum:1} and \ref{enum:2} difference representations of $u$ we have already reached the number $(2p^n-1)(p^n-2)$ stated in \autoref{lem:intersection_numbers_devD}. So, for the remaining types, we need to show that $u$ does not occur in multisets of of \ref{enum:3} and \ref{enum:4}. We examine differences of \ref{enum:3}: $\xi^s (1+px_s)R_n^* - P$. First, we take arbitrary elements $\xi^{k(p^n+1)}(1+pa), a \in \T_n$, from $R_n^*$ and $-p\xi^{\ell(p^n+1)+p^n}$ from $P$ (recall that $P = -P$), where $k,\ell \in \{0,1,\dots,p^n-2\}$ and $a \in \T_n$. So, we are interested in differences of the form
		\begin{align*}
			\xi^s (1+px_s) \xi^{k(p^n+1)} (1+pa) + p\xi^{\ell(p^n+1)+p^n},
		\end{align*}
		where $s \in \{0,1, \dots, p^n-1\}$ and $x_s \in S$ are fixed. We factor out $\xi^{s+k(p^n+1)}$, summarize, and obtain
		\begin{align}
		\label{eq:type3_3}
			\xi^{s+k(p^n+1)} \left( 1 + px_s \right) \left( 1 + pa \right) \left( 1 + p\xi^{(\ell-k)(p^n+1)+p^n-s} \right).
		\end{align}
		We write \cref{eq:type3_3} with respect to all $0 \le k, \ell \le p^n-2$ and all $a \in \T_n$:
		\begin{align}
		\label{eq:type3_4}
			\xi^s (1+px_s) \left(\T_n^* \mathbb{P}_n \left( 1+p\xi^{p^n-s}\T_n^*\right) \right).
		\end{align}
		Since $\xi^{p^n-s} \notin \T_n$, it is clear that $\mathbb{P}_n$ and $1+p\xi^{p^n-s}\T_n^*$ are disjoint. We show that each of the $p^n-1$ other cosets in $\mathbb{P}_{2n} / \mathbb{P}_n$ is represented exactly once by $1+p\xi^{p^n-s}\T_n^*$. This is equivalent to showing that $p\xi^{p^n-s}\T_n$ represents every coset of $\I_{2n}/\I_n$ except $\I_n = p\T_n^*$ itself. Assume there were two elements from the same coset in $p\xi^{p^n-s}\T_n^*$, then their difference would be in $\I_n$. However, for two distinct integers $k,\ell$, the difference
		\begin{align*}
			p\xi^{p^n-s+k(p^n+1)} - \left(p\xi^{p^n-s+\ell(p^n+1)}\right) = \xi^{p^n-s}\left(p\xi^{k(p^n+1)}-p\xi^{\ell(p^n+1)}\right)
		\end{align*}
		is not in $\I_n$ since $\xi^{p^n-s} \notin \T_n$ and $\left(p\xi^{k(p^n+1)}-p\xi^{\ell(p^n+1)}\right) \in \I_n$.
		It follows that \cref{eq:type3_4} equals
		\begin{align*}
			\xi^s (1+px_s) \left(\T_n^*  \left(\mathbb{P}_{2n} \setminus \mathbb{P}_n \right) \right).
		\end{align*}
		With the definition of $V = \T_n^* \times \mathbb{P}_{2n}$ from above, we have
		\begin{align*}
			\xi^s (1+px_s)R_n^* - P = \xi^s \left(V \setminus (1+px_s)R_n^*\right).
		\end{align*}
		Recall that $U = \bigcup_{j=0}^{p^n-1} \xi^j (1+px_j)R_n^*$. Hence $u \in U$ does not occur in multisets of \ref{enum:3}. We finish our proof by examining differences of \ref{enum:4}: Since $P$ is a subset of the maximal ideal $\I_{2n}$ the set of all the differences of distinct elements of $P$ is also a subset of $\I_{2n}$. Thus, $\Delta P$ yields no representations of $u$.
	\end{proof}
	
	For $p^n > 2$, the intersection number $(2p^n-1)(p^n-2)$ of $dev(D)$ does not equal any of the cyclotomic numbers $p^n-2$, $p^n(p^n-1)$ and $(p^n-1)^2$ of $dev(C)$. Hence, \autoref{cor:intersection_numbers_devC} in combination with \autoref{lem:intersection_numbers_devD} proves \autoref{th:designs_noniso} for $p^n > 2$. In the case $p^n=2$, i.\,e., $p=2$ and $n=1$, however, the intersection numbers of $dev(C)$ match those from $dev(D)$. We complete the proof of \autoref{th:designs_noniso} by using the computer algebra system \texttt{Magma}~\cite{magma} to compute the full automorphism groups of $dev(C)$ and $dev(D)$. We see that, for $p^n=2$, the automorphism group of $dev(C)$ has order~$960$, whereas for $dev(D)$ it is only of order~$192$. Thus, these $2$-$(16,5,4)$~designs are nonisomorphic as well.
	
	\begin{remark}
	\label{remark:automorphism_group}
		The full automorphism group of $dev(D)$ has order $4np^{4n}(p^{4n}-1)$, it consists of the additive group $(\Fqfour, +)$ of order $p^{4n}$, the multiplicative group $\Fqfour^*$ of order $p^{4n}-1$ and the Galois group $\mathrm{Gal}(\Fqfour / \F_p)$ of order $4n$. The full automorphism group of $dev(C)$ has order $2p^{5n(p^{2n}-1)}$, it consists of the additive group $(R_{2n}, +)$ of order $p^{4n}$, the Teichmüller group $\T_{2n}$ of $R_{2n}$ of order $p^{2n}-1$, the group of principal units $\mathbb{P}_n$ of the subring $R_n$ of order $p^n$ and an interesting automorphism of order~$2$ of the additive group $(R_{2n}, +)$. In the case $p^n = 2$, it is defined by $1 \mapsto 1 + 2\xi, \xi \mapsto 1 + 3\xi$.
	\end{remark}

\section{Disjoint difference families in Galois rings II}
	\label{sec:EDF_GR}
	\textcite{davis2017} found a new cyclotomic construction of near-complete $(v,k,k-1)$ external difference families in Galois rings $\GR(p^2,r)$ of characteristic $p^2$. This construction was in a more general way already given by \textcite{furino1991} in 1991 for arbitrary commutative rings with an identity. \textcite{furino1991} used the approach to create near-complete disjoint difference families, and we know from \autoref{prop:EDF_DDF} that every near-complete disjoint difference family is also a near-complete external difference family. Furthermore, we remark that the construction by \textcite{furino1991} was generalized in the case $(v,k,k-1)$ by \textcite{buratti2017} to so-called \emph{Ferrero pairs}~$(G,A)$, where $A$ is a non-trivial group of automorphisms of $G$ acting semiregularly on a group $G\setminus\{0\}$. Before we state the result by \textcite{davis2017} we need the following two useful lemmas about differences in Galois rings. Let, like before, $\T$ denote the Teichmüller set, $\T^* = \T \setminus \{0\}$ denote the cyclic Teichmüller group having order $p^r-1$, and $\I = p\GR(p^2,r)$ denote the maximal ideal of the Galois ring $\GR(p^2,r)$.
	\begin{lemma}
	\label{lem:unit_difference_unit}
		In $\GR(p^2, r)$ the difference $u-u'$ of two distinct units $u = \alpha_0(1+p\alpha_1), u' = \alpha_0'(1+p\alpha_1')$, where $\alpha_0, \alpha_0' \in \T^*, \alpha_1, \alpha_1' \in \T$, is a unit if and only if $\alpha_0 \ne \alpha_0'$.
	\end{lemma}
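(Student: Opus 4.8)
The plan is to pass to the residue field by reducing modulo the maximal ideal $\I = p\,\GR(p^2,r)$, using that an element of $\GR(p^2,r)$ is a unit exactly when it does not lie in $\I$. Write $\pi \colon \GR(p^2,r) \to \GR(p^2,r)/\I \cong \F_{p^r}$ for the canonical projection; then $w \in \GR(p^2,r)$ is a unit if and only if $\pi(w) \ne 0$.

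First I would compute the difference directly. Since $\GR(p^2,r)$ has characteristic $p^2$ and every unit has a unique representation of the stated form, expanding gives
\[
    u - u' = \alpha_0(1+p\alpha_1) - \alpha_0'(1+p\alpha_1') = (\alpha_0 - \alpha_0') + p(\alpha_0\alpha_1 - \alpha_0'\alpha_1').
\]
The second summand lies in $\I = p\,\GR(p^2,r)$, so $\pi(u-u') = \pi(\alpha_0) - \pi(\alpha_0')$, and hence $u-u'$ is a unit if and only if $\pi(\alpha_0) \ne \pi(\alpha_0')$.

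Next I would translate this condition back to the Teichmüller set. By construction $\T$ is a complete system of representatives of $\GR(p^2,r)/\I$ (see \autoref{sec:Galois_rings}), so $\pi$ restricts to a bijection from $\T$ onto $\F_{p^r}$; in particular, for $\alpha_0, \alpha_0' \in \T^* \subseteq \T$ we have $\pi(\alpha_0) = \pi(\alpha_0')$ if and only if $\alpha_0 = \alpha_0'$. Combining this with the previous step yields the asserted equivalence: $u - u'$ is a unit if and only if $\alpha_0 \ne \alpha_0'$. For completeness I would also record the complementary case, which is consistent with the hypothesis $u \ne u'$: if $\alpha_0 = \alpha_0'$ then $u - u' = p\alpha_0(\alpha_1 - \alpha_1')$, a nonzero element of $\I$ (here $\alpha_0$ is a unit and $\alpha_1 \ne \alpha_1'$), hence not a unit.

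I do not expect a genuine obstacle here: the whole argument is a single reduction modulo $\I$ together with the representative property of the Teichmüller set recalled in \autoref{sec:Galois_rings}. The only point requiring a moment's care is that distinct Teichmüller elements have distinct images in the residue field, which is immediate from how $\T$ is defined.
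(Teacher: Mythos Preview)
Your proof is correct and follows essentially the same approach as the paper: both expand $u-u'$ directly, observe that the $p$-term lies in $\I$, and use that $\T$ is a system of representatives of $\GR(p^2,r)/\I$ to conclude that $\alpha_0-\alpha_0'$ (and hence $u-u'$) is a unit precisely when $\alpha_0\ne\alpha_0'$. Your phrasing via the projection $\pi$ is slightly more formal, but the underlying argument is identical.
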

	\begin{proof}
		Let $u = \alpha_0(1+p\alpha_1), u' = \alpha_0'(1+p\alpha_1')$, where $\alpha_0, \alpha_0' \in \T^*, \alpha_1, \alpha_1' \in \T$. If $\alpha_0 = \alpha_0' = \alpha$, we have
		\begin{align*}
			u-u' = \alpha(1+p\alpha_1) - \alpha(1+p\alpha_1') = p\alpha(\alpha_1-\alpha_1'),
		\end{align*}
		which is an element of $\I$. If $\alpha_0 \ne \alpha_0'$, we have
		\begin{align*}
			u-u' = \alpha_0(1+p\alpha_1) - \alpha_0'(1+p\alpha_1') = \alpha_0 - \alpha_0' + p(\alpha_1-\alpha_1'),
		\end{align*}
		which is clearly a unit since $\T$ is a system of representatives of $\GR(p^2,r) / \I$.
	\end{proof}
	
	\begin{lemma}
	\label{lem:differences_in_T_cosets}
		\begin{enumerate}
			\item The multiset $\Delta \T^*$ contains only units of $\GR(p^2, r)$. 
			\item Let $d \in (1+p\beta)\T^*$ for some $\beta \in \T^*$. If $d$ is contained in $\Delta \T^*$ then the whole coset $(1+p\beta)\T^*$ is a subset of $\Delta \T^*$.
		\end{enumerate}
	\end{lemma}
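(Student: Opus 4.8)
The plan is to derive both parts directly from \autoref{lem:unit_difference_unit} together with the fact that the Teichmüller group $\T^*$ acts on $\GR(p^2,r)$ by multiplication.

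For the first part, I would observe that every element of $\T^*$ is a unit whose $p$-adic expansion $\alpha_0(1+p\alpha_1)$ has $\alpha_1=0$. Hence for distinct $\alpha,\alpha'\in\T^*$ the Teichmüller parts themselves satisfy $\alpha\ne\alpha'$, and \autoref{lem:unit_difference_unit} immediately yields that $\alpha-\alpha'$ is a unit. Since every element of $\Delta\T^*$ is such a difference, $\Delta\T^*$ consists of units only.

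For the second part, the key point is that $\Delta\T^*$ is invariant under multiplication by elements of $\T^*$. Indeed, if $d=\alpha-\alpha'\in\Delta\T^*$ with $\alpha\ne\alpha'$ in $\T^*$ and $\gamma\in\T^*$, then $\gamma d=\gamma\alpha-\gamma\alpha'$; since $\T^*$ is a group under multiplication, $\gamma\alpha,\gamma\alpha'\in\T^*$, and since $\gamma$ is a unit they remain distinct, so $\gamma d\in\Delta\T^*$. Thus $\T^* d\subseteq\Delta\T^*$. Now assume $d\in(1+p\beta)\T^*$, say $d=(1+p\beta)t$ with $t\in\T^*$. Using commutativity of the Galois ring and $\T^* t=\T^*$, we get
\[
	\T^* d=\{\gamma(1+p\beta)t:\gamma\in\T^*\}=(1+p\beta)\,\T^* t=(1+p\beta)\T^*,
\]
so the whole coset $(1+p\beta)\T^*$ lies in $\Delta\T^*$, as required.

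I do not anticipate a genuine obstacle: the argument amounts to observing that $\Delta\T^*$ is $\T^*$-invariant and, by the first part, contained in the unit group $R^*=\T^*\times\mathbb{P}$ (where $\mathbb{P}$ is the group of principal units), hence a union of cosets of $\T^*$ in $R^*$. The only points needing a moment's care are that $\T^*$ is multiplicatively closed — it is the cyclic Teichmüller group — and that multiplication by a unit is injective; both are immediate.
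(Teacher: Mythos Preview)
Your proof is correct and follows essentially the same approach as the paper: both parts are handled by invoking \autoref{lem:unit_difference_unit} for part~(1) and by showing $\Delta\T^*$ is invariant under multiplication by $\T^*$, hence a union of $\T^*$-cosets, for part~(2). Your write-up is slightly more explicit (spelling out that $\alpha_1=0$ for elements of $\T^*$ and verifying $\T^* d=(1+p\beta)\T^*$), but the underlying argument is identical.
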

	\begin{proof}
		The first statement follows immediately from \autoref{lem:unit_difference_unit}. The second statement can be proved as follows: Let $d \in \Delta \T^*$. Then $d\T^*$ is a multiplicative coset of $\T^*$, and there exists $\beta \in \T^*$ such that $(1+p\beta)\T^* = d\T^*$. Since $d \in \Delta \T^*$, there are distinct elements $\alpha, \alpha' \in \T^*$ such that $d = \alpha-\alpha'$. Since for every $\gamma \in \T^*$ the elements $\alpha\gamma, \alpha'\gamma$ are contained in $\T^*$, the set $d\T^* = \{d\gamma : \gamma \in \T^*\}$ is a subset of $\Delta \T^*$.
	\end{proof}	
	Let us now present the construction of $(v,k,k-1)$ disjoint difference families by \textcite{davis2017}. Since the authors of \cite{davis2017} proved the result in terms of external difference families, we will include a short proof that is analogous to the proof of \autoref{th:Fq_DDF}. We remark that the theorem can also be proved using the results by \textcite{furino1991} or the Ferrero pairs by \textcite{buratti2017}.
	
	\begin{theorem}[{\cite[Theorem~4.1]{davis2017}}]
	\label{th:Davis_EDF}
		Let $\T$ be the Teichmüller set of the Galois ring $\GR(p^2,r)$, and let $\T^* = \T \setminus \{0\}$. The collection
		\begin{equation*}
		\label{eq:Davis_EDF}
			E = \left \lbrace (1+p\alpha)\T^* : \alpha \in \T \right \rbrace \cup p\T^*
		\end{equation*}
		forms a near-complete $(p^{2r}, p^r-1, p^r-2)$ disjoint difference family in the additive group of $\GR(p^2, r)$.
	\end{theorem}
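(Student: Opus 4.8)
\section*{Proof proposal}

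The plan is to follow the scaling-and-counting strategy used in the proof of \autoref{th:Fq_DDF}, adapted to the fact that the unit group of $\GR(p^2,r)$ does not act transitively on the nonzero ring elements, so the two kinds of blocks in $E$ must be handled separately. Write $R = \GR(p^2,r)$, $\T^* = \T\setminus\{0\}$, $\I = pR$ and $\mathbb{P} = 1+\I$, so $R^* = \T^*\times\mathbb{P}$ with $|\T^*| = p^r-1$ and $|\mathbb{P}| = p^r$. First I would pin down the block structure of $E$: as $\alpha$ ranges over $\T$ the elements $1+p\alpha$ range over all of $\mathbb{P}$, so the sets $(1+p\alpha)\T^*$ are exactly the $p^r$ cosets of $\T^*$ in $R^*$ and partition $R^*$; and $p\T^* = \I\setminus\{0\}$ because $a\mapsto pa$ maps $\T$ bijectively onto $\I$. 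Hence $E$ is a family of $b = p^r+1$ pairwise disjoint blocks of size $k = p^r-1$ partitioning $R\setminus\{0\}$, so it is near-complete, and it remains only to show that every nonzero element of $R$ occurs exactly $p^r-2$ times in $\bigcup_{B\in E}\Delta B$; the value $\lambda = p^r-2 = k-1$ is then forced by $\lambda(v-1) = bk(k-1)$.

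Next I would record a dichotomy for the differences. By \autoref{lem:unit_difference_unit}, any two distinct elements of a single coset $(1+p\alpha)\T^*$, written as $\gamma(1+p\alpha)$ and $\gamma'(1+p\alpha)$ with $\gamma,\gamma'\in\T^*$ and $\gamma\ne\gamma'$, have a unit difference since their Teichmüller parts differ; hence $\bigcup_{\alpha\in\T}\Delta\!\big((1+p\alpha)\T^*\big)$ is supported on $R^*$. On the other hand $\Delta(p\T^*) = p\cdot\Delta\T^*$ is supported on $\I\setminus\{0\}$, since distinct Teichmüller representatives are distinct modulo $\I$. So the ``unit blocks'' and the block $p\T^*$ contribute differences to disjoint pieces of $R\setminus\{0\}$ and may be analysed independently.

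The third step is the scaling argument on each piece. Left multiplication by a unit $z\in R^*$ permutes the cosets of $\T^*$ in $R^*$ among themselves, hence permutes the unit blocks of $E$; and it fixes the block $p\T^*$ setwise, since $z\cdot p\gamma = p(\gamma_0\gamma)$ for $\gamma\in\T^*$, where $\gamma_0\in\T^*$ is the Teichmüller part of $z$. Therefore $\bigcup_{B\in E}\Delta B$ is invariant as a multiset under the $R^*$-action. Since $R^*$ acts transitively on $R^*$ and (through the Teichmüller part) transitively on $\I\setminus\{0\} = p\T^*$, the multiplicity of an element in this multiset is constant over all units and constant over all nonzero non-units. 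It remains to count: the $p^r$ unit blocks contribute $p^r(p^r-1)(p^r-2)$ ordered differences, all lying in $R^*$, a set of size $p^r(p^r-1)$, so each unit is represented $p^r-2$ times; the single block $p\T^*$ contributes $(p^r-1)(p^r-2)$ ordered differences, all lying in $\I\setminus\{0\}$, a set of size $p^r-1$, so each nonzero non-unit is represented $p^r-2$ times. Thus every nonzero element of $R$ occurs exactly $p^r-2$ times, and $E$ is a near-complete $(p^{2r},p^r-1,p^r-2)$ disjoint difference family.

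There is no serious obstacle here; the points that need care --- all of them routine in the Galois-ring setup of characteristic $p^2$ --- are that $\{1+p\alpha : \alpha\in\T\} = \mathbb{P}$ and $p\T = \I$, that within a fixed coset $(1+p\alpha)\T^*$ the Teichmüller parts of the elements run over all of $\T^*$ (so \autoref{lem:unit_difference_unit} applies to every pair), and the two transitivity statements for the $R^*$-action. If one prefers to avoid the scaling argument on the unit blocks, one can instead use $\Delta\!\big((1+p\alpha)\T^*\big) = (1+p\alpha)\,\Delta\T^*$ together with \autoref{lem:differences_in_T_cosets} to see directly that $\bigcup_{\alpha\in\T}(1+p\alpha)\,\Delta\T^*$ is a union of $\T^*$-cosets hitting each with equal multiplicity, but the transitivity argument mirrors the proof of \autoref{th:Fq_DDF} more closely.
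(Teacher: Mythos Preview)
Your proof is correct and follows essentially the same route as the paper: a scaling argument showing that multiplication by units preserves the multiset of internal differences (the paper phrases this as an explicit bijection $x\mapsto zx$ rather than as an $R^*$-action, but it is the same idea), followed by the same counting, and a separate handling of the block $p\T^* = \I\setminus\{0\}$ via its being a trivial $(p^r,p^r-1,p^r-2)$ difference set in $\I$. Your write-up is in fact a bit more careful than the paper's, since you explicitly verify the partition of $R\setminus\{0\}$ and the dichotomy of supports before counting.
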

	\begin{proof}
		We will first count the number of differences for the units of $\GR(p^2,r)$ and next for the non-invertible elements. Let $x,y \in \GR(p^2,r)^*$. Assume $x = u-u'$, where $u,u'$ are elements of the same coset $(1+p\alpha)\T^*$ of the Teichmüller group $\T^*$ for some fixed element $\alpha \in \T$. There exists $z \in \GR(p^2,r)$ so that $y = zx$. Hence, $y = zu-zu'$ and $zu, zu' \in (1+p\alpha')\T^*$ for some $\alpha' \in \T$, and we have found a representation of $y$ as the difference of two distinct elements from the same set $(1+p\alpha')\T^*$. Since every difference for $y$ will also give us a difference for $x$, it follows that every unit will have the same number of differences. In each of the $p^r$ sets $(1+p\alpha)\T$ we have $(p^r-1)(p^r-2)$ differences, giving us a total of $p^r(p^r-1)(p^r-2)$ differences. From \autoref{lem:differences_in_T_cosets}, we know that all these differences are units. Since there are $p^r(p^r-1)$ units in $\GR(p^2,r)$, each unit has
		\begin{align*}
			\frac{p^r(p^r-1)(p^r-2)}{p^r(p^r-1)} = p^r-2
		\end{align*}
		representations as a difference from two distinct elements of the sets $(1+p\alpha)\T^*, \alpha \in \T$.\par
		We now consider the non-invertible non-zero elements of $\GR(p^2,r)$, i.\,e. the elements of the set $p\T^* = \I \setminus \{0\}$. Since $\I$ is a group under addition, $\I \setminus \{0\}$ is a trivial $(p^r,p^r-1,p^r-2)$ difference set in $\I$. Hence, $\Delta p\T^* = (p^r-2) (\I \setminus \{0\})$. Combining these two results, we see that every non-zero element of $\GR(p^2,r)$ has $p^r-2$ differences in the sets of $E$.
	\end{proof}
	
	\textcite{davis2017} remark that, since $p^r+1$ divides $p^{2r}-1$, there is also a disjoint difference family with the same parameters in $(\F_{p^{2r}},+)$ which can be constructed using \autoref{th:Fq_DDF}. The authors ask whether the associated designs, i.\,e. the developments of these two disjoint difference families, are isomorphic. We will answer this question by showing that the designs are nonisomorphic in all but one case. This is our second main theorem.
	
	\begin{theorem}
	\label{th:designs_noniso_Davis}
		Let $C$ be a $(p^{2r}, p^r-1, p^r-2)$ disjoint difference family in the additive group of the finite field $(\F_{p^{2r}},+)$ constructed with \autoref{th:Fq_DDF}, and let $E$ be a disjoint difference family with the same parameters in the additive group of the Galois ring $(\GR(p^2,r),+)$ constructed with \autoref{th:Davis_EDF}. The $2$-$(p^{2r}, p^r-1, p^r-2)$ designs $dev(C)$ and $dev(E)$ are isomorphic if $p=3$ and $r=1$, and they are nonisomorphic in every other case.
	\end{theorem}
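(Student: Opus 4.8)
The plan is to follow the blueprint of \autoref{th:designs_noniso}: first determine the complete list of block intersection numbers of $dev(C)$ via uniform cyclotomy, then exhibit a block intersection number of $dev(E)$ not in that list, and finally treat the one exceptional parameter by hand. For $dev(C)$ the relevant data in \autoref{prop:uniform_cyclotomy} are $e=p^r+1$ and $f=p^r-1$: here $-1\equiv p^r\pmod e$ is a power of $p$, and writing $p^{2r}=s^2$ with $s=-p^r$ one has $s\equiv 1\pmod e$, hence $\eta=(s-1)/e=-1$. Feeding $\eta=-1$ into \cref{eq:uniform_cyclotomy} and arguing exactly as in \autoref{cor:intersection_numbers_devC}, the design $dev(C)$ has precisely the three block intersection numbers $0$, $1$ and $p^r-2$ (collapsing to $\{0,1\}$ when $p^r=3$).

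For $dev(E)$ I would concentrate, as in \autoref{lem:intersection_numbers_devD}, on the single base block $\T^*$. Since $\Delta\bigl((1+p\alpha)\T^*\bigr)=(1+p\alpha)\,\Delta\T^*$ and $\Delta(p\T^*)\subseteq\I$, the intersection number $|B\cap(B+u)|$ of a block $B=(1+p\alpha)\T^*+g$ with a unit $u$ equals the multiplicity of the coset $(1+p\alpha)^{-1}u\,\T^*$ in the multiset $\Delta\T^*$, and by \autoref{lem:differences_in_T_cosets} this multiset is a union of full multiplicative cosets of $\T^*$. Using that the Teichmüller component of a unit $w$ of $\GR(p^2,r)$ equals $w^{p^r}$ (a short computation in characteristic $p^2$), each $\beta\in\T^*\setminus\{1\}$ contributes the coset of $1-\beta$, i.e.\ of the principal unit $\phi(\beta):=(1-\beta)^{1-p^r}$, so the multiplicity of $\pi\T^*$ in $\Delta\T^*$ is $|\phi^{-1}(\pi)|$, the total being $p^r-2$. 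The key observation is that $1-\beta^{-1}=-\beta^{-1}(1-\beta)$, so for odd $p$ (where $-\beta^{-1}\in\T^*$ by \autoref{lem:-1_GR}) one gets $\phi(\beta^{-1})=\phi(\beta)$; hence for $p^r>3$ the map $\phi$ is not injective (any $\beta\notin\{1,-1\}$ witnesses this), so some coset occurs in $\Delta\T^*$ with multiplicity at least $2$. If in addition $\phi$ is not constant, the $p^r-2$ units of multiplicity are distributed over at least two cosets, so one coset has multiplicity in $\{2,\dots,p^r-3\}$; for $p^r\ge 5$ this is a block intersection number of $dev(E)$ lying outside $\{0,1,p^r-2\}$, and $dev(C)\not\cong dev(E)$ follows.

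The remaining parameters are handled separately. When $p^r=3$ (that is, $p=3$, $r=1$) both $dev(C)$ and $dev(E)$ are $2$-$(9,2,1)$ designs; such a design necessarily consists of all $2$-subsets of its point set, so both designs equal the complete graph $K_9$ and are isomorphic. For $p=2$ (where $\T^*$ has odd order and $-1$ is a principal unit, so the argument above only yields $\phi(\beta^{-1})=-\phi(\beta)$), and in particular in the small residual case $p^r=4$, the set of block intersection numbers may fail to separate the designs; there one compares finer data — for instance the number of block pairs realising each intersection number — or, as for the case $p^n=2$ in the proof of \autoref{th:designs_noniso}, computes the full automorphism groups with \texttt{Magma}.

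The step I expect to be the main obstacle is the clause \enquote{if in addition $\phi$ is not constant}: one must exclude the degenerate possibility that every difference of two distinct Teichmüller units lies in a single multiplicative coset of $\T^*$. This seems to require the explicit formula for the second $p$-adic digit of a difference $\alpha-\alpha'$ of Teichmüller elements, which brings in a truncated logarithm $\sum_{j=1}^{p-1}t^{\,j}/j$ over $\F_{p^r}$; concretely, if $\Delta\T^*$ were a single coset, the differences $\xi-1$ and $\xi^2-1$ would force $1+\xi\in\T^*$, i.e.\ $1+\xi$ would equal its own Teichmüller lift, and this can be ruled out for $p^r\ge 4$. A secondary difficulty is to cover $p=2$ by a uniform argument rather than splitting it off computationally.
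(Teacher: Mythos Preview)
Your plan for $dev(C)$ and for odd $p$ is essentially the paper's own approach. The pairing $\phi(\beta^{-1})=\phi(\beta)$ is exactly \autoref{lem:pairwise_differences} rewritten (the paper pairs representations $(\alpha,\alpha')\leftrightarrow(-\alpha',-\alpha)$, which after dividing by $\alpha$ becomes your $\beta\leftrightarrow\beta^{-1}$), and your \enquote{$\phi$ not constant} obstacle is \autoref{lem:d_less_p^r-2}. The paper's resolution of that obstacle is close to your sketch but cleaner and avoids the truncated logarithm: if $\Delta\T^*$ were a single coset, the parity argument forces that coset to be $2\T^*$; then from $\alpha-1=2\beta$ and $\alpha+1=2(\beta+1)$ one gets $\beta+1\in\T^*$ for \emph{every} $\beta\in\T^*$, so $\T$ is closed under adding $1$ and hence $p\in\T$, a contradiction. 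Your $\xi,\xi^2$ variant only shows $1+\xi\in\T^*$ for the single generator, which is not obviously contradictory on its own; the paper's trick is to run the same step for all $\alpha$. Your treatment of $p^r=3$ is in fact nicer than the paper's: a $2$-$(9,2,1)$ design is the complete graph $K_9$, so the isomorphism is immediate, whereas the paper exhibits an explicit bijection found by \texttt{Magma}.

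The genuine gap in your proposal is the case $p=2$, $r\ge3$. These are infinitely many parameters, so deferring to a computer is not an option, and your $\phi$-pairing only yields $|\phi^{-1}(\pi)|=|\phi^{-1}(-\pi)|$, which says nothing about a single fiber. The paper handles this case with a short, self-contained argument you are missing: in $\GR(4,r)$ the Teichm\"uller set $\T$ is a $(2^r,2^r,2^r,1)$ relative difference set in $(\GR(4,r),+)$ relative to $\I$ (\autoref{prop:TeichmuellerSet_PDS_even}), so every unit has exactly one representation in $\Delta\T^*$. Since for $p=2$ the element $-1$ is a principal unit, the blocks $E_\alpha=(1+2\alpha)\T^*$ and $-E_\alpha$ are distinct blocks of $E$, and one then checks that $\Delta_+\T^*$ represents each unit exactly twice (two ordered sums $\beta+\gamma$ and $\gamma+\beta$, uniqueness coming from the relative difference set property). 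Hence $|(E_\alpha+d)\cap(-E_\alpha)|=2$ for suitable $d$, giving the forbidden intersection number $2\notin\{0,1,2^r-2\}$ for $r\ge3$. Only $p=2$, $r=2$ then needs a machine check of automorphism groups, as in your plan.
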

	
	To prove \autoref{th:designs_noniso_Davis} we will consider four cases: First, we will examine the case $p=3$ and $r=1$, second, we will consider $p=2$ and $r=2$, third, we look at the case $p=2$ and $r \ge 3$, and last we will consider $p \ge 3$ and arbitrary $r$ (except $p=3$ and $r=1$).
	
	If $p=3$ and $r=1$, the $2$-designs $dev(C)$ and $dev(E)$ are isomorphic. In this case $\GR(9,1) \cong \Z_9$ and $(\F_9,+) \cong \Z_3 \times \Z_3$. An isomorphism between $dev(E)$ and $dev(C)$ computed by \texttt{Magma}~\cite{magma} is the map $f:\Z_9 \to \Z_3 \times \Z_3$ on the point set of $dev(E)$ with
		\begin{align*}
			0 &\mapsto (0,0),&& 1 \mapsto (0,1),&& 2 \mapsto (1,2),&& 3 \mapsto (1,1), && 4 \mapsto (2,2)\\
			5 &\mapsto (2,0),&& 6 \mapsto (1,0),&& 7 \mapsto (2,1),&& 8 \mapsto (0,2).
		\end{align*}
	
	If $p=2$ and $r=2$, the designs $dev(E)$ and $dev(C)$ share the same block intersection numbers (together with their multiplicities), which we use in the proof of the remaining two cases. For both designs the block intersection numbers are $0$~($1600$~times), $1$~($1440$ times) and $2$ ($120$ times). Hence, we solve this case by computing the automorphism groups of the designs with the help of \texttt{Magma}~\cite{magma}: The automorphism group of $dev(E)$ has order $384$ while the automorphism group of $dev(C)$ is of order $5760$. If $dev(E)$ and $dev(C)$ were isomorphic, their automorphism groups would be the same. Hence, the two designs are nonisomorphic.
	
	For the case $p=2$ and $r \ge 3$ and the case $p\ge 3$ (except $p=3$ and $r=1$) we will prove \autoref{th:designs_noniso_Davis} similarly to \autoref{th:designs_noniso} by showing that the block intersection numbers of $dev(C)$ and $dev(E)$ differ. We start by calculating the block intersection numbers of $dev(C)$, that are the cyclotomic numbers of order $p^r+1$ in $\F_{p^{2r}}$. As before, these cyclotomic numbers are uniform (see \autoref{prop:uniform_cyclotomy}):
	
	\begin{corollary}
	\label{cor:intersection_numbers_devC_Davis}
		Let $C$ be a $(p^{2r}, p^r-1, p^r-2)$ disjoint difference family in the additive group of the finite field $\F_{p^{2r}}$ constructed with \autoref{th:Fq_DDF}. The $2$-$(p^{2r}, p^r-1, p^r-2)$ design $dev(C)$ has exactly three block intersection numbers, namely $p^r-2$, $0$ and $1$.
	\end{corollary}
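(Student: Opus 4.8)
I would prove \autoref{cor:intersection_numbers_devC_Davis} by mimicking the proof of \autoref{cor:intersection_numbers_devC}: I verify that the family $C$ lands in the uniform cyclotomy regime of \autoref{prop:uniform_cyclotomy} and then read off the cyclotomic numbers of order $e = p^r+1$ in $\F_{p^{2r}}$, which, as explained in the text preceding \autoref{cor:intersection_numbers_devC}, are precisely the block intersection numbers of $dev(C)$. The hypotheses of \autoref{prop:uniform_cyclotomy} are checked with $m = 2r$: from $p^{2r}-1 = (p^r-1)(p^r+1)$ we see that $e = p^r+1 \ge 3$ divides $p^m-1$, and $p^r \equiv -1 \pmod e$ shows that $-1$ is the $r$-th power of $p$ modulo $e$. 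If $p$ is odd one additionally needs $f = (p^{2r}-1)/e = p^r-1$ to be even, which holds since $p$ is odd; for $p=2$ no further condition is required. So \autoref{prop:uniform_cyclotomy} applies.

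The only place where a little care is needed -- and the point that differs from \autoref{cor:intersection_numbers_devC} -- is the value of $\eta$. From $p^{2r} = s^2$ we get $s = \pm p^r$, and the normalisation $s \equiv 1 \pmod e$ forces the choice $s = -p^r$ (the value $+p^r$ is $\equiv -1 \pmod e$), so that
\[
	\eta = \frac{s-1}{e} = \frac{-p^r-1}{p^r+1} = -1.
\]
This is the analogue of $\eta = p^n-1$ in \autoref{cor:intersection_numbers_devC}, but with the opposite sign of $s$. Substituting $\eta = -1$ and $e = p^r+1$ into \cref{eq:uniform_cyclotomy} yields $(0,0)_{p^r+1} = 1-(p^r-2)(-1)-1 = p^r-2$, then $(0,i)_{p^r+1} = (i,0)_{p^r+1} = (i,i)_{p^r+1} = \eta^2+\eta = 0$ for $i \ne 0$, and $(i,j)_{p^r+1} = \eta^2 = 1$ for $i \ne j$ with $i,j \ne 0$.

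Finally I would record, exactly as in \autoref{cor:intersection_numbers_devC}, that these are the block intersection numbers of $dev(C)$: two distinct blocks $C_i+g$, $C_j+h$ meet in $(j-k,i-k)_{p^r+1}$ points when $h-g$ lies in the coset $C_k$, and in $|C_i \cap C_j| = 0$ points when $h=g$ and $i \ne j$; letting $i,j,k$ range shows every cyclotomic number $(a,b)_{p^r+1}$ occurs, and the extra value $0$ from the $h=g$ case is already among them. Hence $dev(C)$ has block intersection numbers $p^r-2$, $0$ and $1$ -- three distinct values once $p^r \ge 4$, which covers every case in which \autoref{cor:intersection_numbers_devC_Davis} will be used. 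There is essentially no obstacle here; the whole content of the corollary is the invocation of \autoref{prop:uniform_cyclotomy} with the correct sign of $s$.
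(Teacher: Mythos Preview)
Your proof is correct and follows exactly the same route as the paper: verify the hypotheses of \autoref{prop:uniform_cyclotomy} with $e=p^r+1$, observe that the normalisation $s\equiv 1\pmod e$ forces $s=-p^r$ and hence $\eta=-1$, and substitute into \cref{eq:uniform_cyclotomy}. Your additional paragraph explaining why the cyclotomic numbers really are the block intersection numbers, and your remark that the three values are distinct only for $p^r\ge 4$, add detail the paper leaves implicit, but the argument is otherwise identical.
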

	\begin{proof}
		Since $e = p^r+1$, we have $-1 \equiv p^r \pmod e$, and we are allowed to employ \autoref{prop:uniform_cyclotomy}. From $s^2 = p^{2r}$ and $s \equiv 1 \pmod e$ it follows that $s = -p^r$. Thus, $\eta = (-p^r-1)/(p^r+1) = -1$, and we get the cyclotomic numbers
		\begin{align*}
			(0,0)_{p^r+1} &= p^r-2, \nonumber\\
			(0,i)_{p^r+1} = (i,0)_{p^r+1} = (i,i)_{p^r+1} &= 0 &&\textrm{for } i \ne 0,\\
			(i,j)_{p^r+1} &= 1 &&\textrm{for } i \ne j \textrm{ and } i,j \ne 0, \nonumber
		\end{align*}
		that occur as the intersection numbers of $dev(C)$.
	\end{proof}
	
	\begin{remark}
	\label{rem:Fq_subfield}
		Since $r$ divides $2r$, the finite field $\F_{p^{2r}}$ contains a unique subfield $\F_{p^r}$. Thus, the subgroup $C_0 = \{ \alpha^{i(p^r+1)} : i = 0, 1, \dots, p^r-2\}$ of order $p^r-1$ of $\F_{p^{2r}}^*$ that generates the disjoint difference family $C$ is the multiplicative group $\F_{p^r}^*$ of $\F_{p^r}$. Hence, it is clear that $\Delta C_0 = (p^r-2)C_0$.
	\end{remark}
	
	We now focus on the intersection numbers of $dev(E)$ in the Galois ring $\GR(p^2, r)$. The following \autoref{lem:int_numbers_Davis} in combination with \autoref{cor:intersection_numbers_devC_Davis} will finish the proof of \autoref{th:designs_noniso_Davis}.
	\begin{lemma}
	\label{lem:int_numbers_Davis}
		Let $E$ be a $(p^{2r}, p^r-1, p^r-2)$ disjoint difference family in the additive group of the Galois ring $\GR(p^2,r)$ constructed with \autoref{th:Davis_EDF}. If $p=2$ and $r \ge 2$, the $2$-$(2^{2r}, 2^r-1, 2^r-2)$ design $dev(E)$ has a block intersection number $2$. If $p\ge 3$ (except the case $p=3$ and $r=1$), the $2$-$(p^{2r}, p^r-1, p^r-2)$ design $dev(E)$ has a block intersection number $N$ with $1 < N < p^r-2$.
	\end{lemma}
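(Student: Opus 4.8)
We argue as in the proof of \autoref{lem:intersection_numbers_devD}. Write $R:=\GR(p^2,r)$. The block intersection numbers of $dev(E)$ are exactly the multiplicities with which elements of $R$ occur in the multisets $\Delta B$ (for nonzero elements) and $B'-B$ (for distinct base blocks $B\neq B'$), where $B,B'$ run over the base blocks $p\T^*$ and $(1+p\alpha)\T^*$, $\alpha\in\T$, of $E$; in particular $0$ occurs, since distinct base blocks are disjoint. Multiplication by a unit of $R$ is an additive automorphism of $R$ that fixes $p\T^*$ and permutes the cosets $(1+p\alpha)\T^*$, hence induces an automorphism of $dev(E)$; so up to isomorphism it suffices to inspect $\Delta(p\T^*)$, $\T^*-p\T^*$, $p\T^*-\T^*$, and $\T^*-(1+p\gamma)\T^*$ for $\gamma\in\T$ (the case $\gamma=0$ being $\Delta\T^*$). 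Since $p\T^*=\I\setminus\{0\}$ is the set of nonzero elements of the elementary abelian group $\I$, we get $\Delta(p\T^*)=(p^r-2)(\I\setminus\{0\})$, contributing the numbers $p^r-2$ and $0$; and, using \autoref{lem:unit_difference_unit}, one checks directly that $\T^*-p\T^*$ and $p\T^*-\T^*$ consist of units only and have all multiplicities at most $1$. These are precisely the three block intersection numbers $p^r-2,\,0,\,1$ of $dev(C)$ from \autoref{cor:intersection_numbers_devC_Davis}, so everything hinges on one of the multisets $\T^*-(1+p\gamma)\T^*$ yielding a value outside $\{0,1,p^r-2\}$.

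Write a typical element of $\T^*-(1+p\gamma)\T^*$ as $\xi^{b}\bigl(\xi^{c}-(1+p\gamma)\bigr)$. Exactly as in \autoref{lem:differences_in_T_cosets}, this multiset is a union of full multiplicative $\T^*$-cosets and the multiplicity is constant on each of them; for a unit $u$ that multiplicity equals the number of $c\in\{1,\dots,p^r-2\}$ for which $\xi^{c}-(1+p\gamma)$ lies in the $\T^*$-coset of $u$ (when $\gamma\neq0$, the exponent $c=0$ contributes only to non-units). Identifying the $\T^*$-cosets of units with the group of principal units $\mathbb{P}=1+\I$, this is the size of a fibre of the map $\phi_{\gamma}\colon c\mapsto\bigl(\text{$\T^*$-coset of }\xi^{c}-(1+p\gamma)\bigr)$. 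From the $p$-adic expansion of $\xi^{c}-(1+p\gamma)$ one reads off $\phi_{\gamma}(c)$ in terms of the reduction $\bar\xi^{c}\in R/\I\cong\F_{p^r}$ and the Teichmüller ``carry''; the required block intersection number will come from a $\gamma$ for which $\phi_{\gamma}$ is not injective but also not constant.

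\emph{The case $p=2$.} By \autoref{lem:-1_GR}, $-1\in\mathbb{P}$, say $-1=1+2\gamma_{0}$ with $\gamma_{0}\in\T$, so $\T^*-(1+2\gamma_{0})\T^*=\T^*+\T^*$. In $\GR(4,r)$ the Teichmüller lift $\omega\colon\F_{2^r}\to\T$ (inverse to reduction) satisfies $\omega(x)+\omega(y)=\omega(x+y)+2\,\omega\!\bigl((xy)^{2^{r-1}}\bigr)$, so the coset $\phi_{\gamma_{0}}(c)$ of $1+\xi^{c}$ corresponds to $\bar\xi^{\,c\,2^{r-1}}/(1+\bar\xi^{c})\in\F_{2^r}$. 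Applying the Frobenius (an automorphism of $\F_{2^r}$ that only relabels cosets), the relation $\phi_{\gamma_{0}}(c)=\phi_{\gamma_{0}}(c')$ becomes $\bar\xi^{c}/(1+\bar\xi^{c})^{2}=\bar\xi^{c'}/(1+\bar\xi^{c'})^{2}$, which clears to $(\bar\xi^{c}+\bar\xi^{c'})(1+\bar\xi^{c+c'})=0$; hence $\phi_{\gamma_{0}}(c)=\phi_{\gamma_{0}}(c')$ if and only if $c'\equiv\pm c\pmod{2^r-1}$. As $2^r-1$ is odd, $c\not\equiv-c$ for all $c\in\{1,\dots,2^r-2\}$, so $\phi_{\gamma_{0}}$ is exactly $2$-to-$1$ onto its image; equivalently $\T^*+\T^*$ meets $2^{r-1}-1$ of the $\T^*$-cosets, each with multiplicity $2$. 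Thus $dev(E)$ has a block intersection number $2$.

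\emph{The case $p$ odd, $(p,r)\neq(3,1)$.} Take $\gamma=0$, so $\phi_{0}(c)$ is the $\T^*$-coset of $\xi^{c}-1$. By \autoref{lem:-1_GR}, $-1\in\T^*$, hence $\xi^{-c}-1=-\xi^{-c}(\xi^{c}-1)$ with $-\xi^{-c}\in\T^*$, so $\phi_{0}(-c)=\phi_{0}(c)$; since $p^r\geq5$ the domain contains some $c\neq(p^r-1)/2$, so $\phi_{0}$ is not injective and $\Delta\T^*$ has an element of multiplicity $\geq2$. It remains to rule out that $\Delta\T^*$ sits in a single $\T^*$-coset (which would only reproduce $0$ and $p^r-2$). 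If it did, then $(\xi^{2j}-1)/(\xi^{j}-1)=\xi^{j}+1\in\T^*$ whenever $j,2j\not\equiv0\pmod{p^r-1}$, i.e.\ $\omega(y+1)=\omega(y)+1$ for every $y\in\F_{p^r}^{*}\setminus\{1,-1\}$. For $p\geq5$, running this relation along the prime field (using $\omega(-1)=-1$) forces the Teichmüller representative of $m$ to equal $m$ for every $m\in\{1,\dots,p-1\}$, which is impossible: the Teichmüller units of $\Z_{p^2}$ are closed under multiplication modulo $p^2$, while $\{1,\dots,p-1\}$ is not (for instance $2\cdot\tfrac{p+1}{2}=p+1$). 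For $p=3$ and $r\geq2$, applying the relation to $y,\,y+1,\,y+2$ with $y\in\F_{3^r}\setminus\F_{3}$ gives $\omega(y)=\omega(y)+3$, i.e.\ $3=0$ in $\GR(9,r)$, again absurd. Hence $\phi_{0}$ is non-injective and non-constant, so it has a fibre of size $N$ with $2\leq N\leq p^r-3$, and $N$ is a block intersection number of $dev(E)$ with $1<N<p^r-2$. The delicate point here is exactly this last step for $p$ odd --- showing that the Galois-ring ``cyclotomy'' attached to $\Delta\T^*$ is genuinely non-uniform, collapsing onto one coset precisely in the excluded case $(p,r)=(3,1)$, in contrast with the uniform finite-field cyclotomy of \autoref{prop:uniform_cyclotomy}; if one wants the sharper value $N=2$, one shows, as in the case $p=2$, that $\phi_{0}$ is $2$-to-$1$ away from its single fixed point $c=(p^r-1)/2$.
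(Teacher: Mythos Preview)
Your approach coincides with the paper's: for $p=2$ you exhibit the block intersection number $2$ by looking at $\T^*-(-\T^*)=\T^*+\T^*$, which is exactly the paper's $(E_\alpha+d)\cap(-E_\alpha)$. The paper deduces multiplicity~$2$ from the relative--difference--set property of $\T$ (\autoref{prop:TeichmuellerSet_PDS_even} and \autoref{lem:sum_Teichmueller_even_atmost2}); your explicit carry--formula computation $\omega(x)+\omega(y)=\omega(x+y)+2\,\omega(\sqrt{xy})$ reaches the same conclusion and is correct. For $p$ odd both you and the paper examine $\Delta\T^*$, use the pairing $c\leftrightarrow -c$ to force a multiplicity $\ge 2$, and then rule out the degenerate possibility that $\Delta\T^*$ is supported on a single $\T^*$--coset.

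There is, however, a concrete error in your $p\ge 5$ step. The relation $\omega(y+1)=\omega(y)+1$ is only available for $y\notin\{0,1,-1\}$, so along $\F_p$ you cannot link $\omega(1)=1$ to $\omega(2)$: anchoring at $\omega(-1)=-1$ and running $y=p-2,p-3,\dots,2$ yields $\omega(m)=m-p$ for every $m\in\{2,\dots,p-1\}$, \emph{not} $\omega(m)=m$. Hence your stated contradiction (that $\{1,\dots,p-1\}$ fails to be multiplicatively closed in $\Z_{p^2}$) does not apply as written. The fix is immediate and stays within your framework: multiplicativity of $\omega$ forces $\omega(4)=\omega(2)^2$, i.e.\ $4-p\equiv(2-p)^2\equiv 4-4p\pmod{p^2}$, whence $p\mid 3$, contradicting $p\ge 5$. (For $r\ge 2$ you could alternatively reuse your clean $p=3$ cycle argument with any $y\notin\F_p$.) The paper's route to the same contradiction is slightly different: it uses the parity observation of \autoref{lem:pairwise_differences} to pin the putative single coset down to $2\T^*$, and then argues that $\T$ would have to be additively closed and hence contain~$p$. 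Finally, your closing parenthetical that $\phi_0$ is exactly $2$-to-$1$ away from its fixed point is not needed for the lemma and is not established by what you wrote.
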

	We prove \autoref{lem:int_numbers_Davis} by a series of lemmas. We start by considering the Galois ring $\GR(4,r)$, $r \ge 2$, i.\,e., we set $p=2$, and show that, in this case, $2$ is a block intersection number of $dev(E)$ if $r \ge 2$.  Recall that in the case $p=2$, according to \autoref{lem:-1_GR}, $-1$ is a principal unit. Looking at the construction of our disjoint difference family $E$ we notice that for every block $(1+2\alpha)\T^*, \alpha \in \T$, of $E$ its additive inverse $-(1+2\alpha)\T^*, \alpha \in \T$, is also a block of $E$, and the two sets are disjoint. 
	We also need the following result:
	
	\begin{lemma}
		In the Galois ring $\GR(4,r)$, $r \ge 2$, the Teichmüller set $\T$ is the set of all squares.
	\end{lemma}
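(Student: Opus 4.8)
The plan is to compute the squares in $\GR(4,r)$ directly from the $2$-adic representation, using crucially that the ring has characteristic $4$. Every element $a\in\GR(4,r)$ has a unique representation $a=\alpha_0+2\alpha_1$ with $\alpha_0,\alpha_1\in\T$. Squaring gives $a^2=\alpha_0^2+4\alpha_0\alpha_1+4\alpha_1^2$, and since $4=0$ in $\GR(4,r)$ the last two terms vanish, so $a^2=\alpha_0^2$. Because the Teichmüller set $\T=\{0\}\cup\T^*$ is closed under multiplication, $\alpha_0^2\in\T$; hence every square lies in $\T$, i.e. $\{a^2:a\in\GR(4,r)\}\subseteq\T$.

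For the reverse inclusion I would show that already the restriction of the squaring map to $\T$ is surjective onto $\T$. It fixes $0$, and on the cyclic Teichmüller group $\T^*$ of order $2^r-1$ it is the group endomorphism $x\mapsto x^2$. Since $2^r-1$ is odd, $\gcd(2,2^r-1)=1$, so this endomorphism is in fact an automorphism of $\T^*$; in particular every element of $\T^*$ is the square of an element of $\T^*$. Together with $0=0^2$ this shows that every $\tau\in\T$ equals $\alpha_0^2=a^2$ for a suitable $a=\alpha_0\in\T\subseteq\GR(4,r)$. Combining the two inclusions yields $\{a^2:a\in\GR(4,r)\}=\T$.

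There is essentially no real obstacle here; the argument is a short computation. The only points that need care are that the cancellation of the cross term $4\alpha_0\alpha_1$ and of $4\alpha_1^2$ genuinely relies on the characteristic being $4$ (so this description of the squares is special to $\GR(p^2,r)$ with $p=2$ and would fail in $\GR(2^m,r)$ for $m\ge 3$), and that the surjectivity of the squaring map on $\T^*$ uses that $|\T^*|=2^r-1$ is odd. Note the same reasoning in fact works for $r=1$ as well, but we only need the statement for $r\ge 2$.
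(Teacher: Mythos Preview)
Your proof is correct and follows essentially the same idea as the paper's: compute $x^2$ explicitly and observe that it lands in $\T$. The only cosmetic difference is that the paper uses the multiplicative decomposition $x=\alpha_0(1+2\alpha_1)$ for units (together with the fact that every principal unit squares to $1$) and treats ideal elements separately, whereas you use the additive $2$-adic form $x=\alpha_0+2\alpha_1$ uniformly; you are also more explicit than the paper about the reverse inclusion via the bijectivity of squaring on the odd-order group $\T^*$.
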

	\begin{proof}
		Let $x \in \GR(4,r)$. If $x \in \I$, say $x = p\alpha$ for some $\alpha \in \T$, then $x^2 = p^2 \alpha^2 = 0$. If $x$ is a unit, say $x = \alpha_0(1+p\alpha_1), \alpha_0 \in \T^*, \alpha_1 \in \T, \alpha_0 \ne 0$, then $x^2 = \alpha_0^2 (1+p\alpha_1)^2 = \alpha_0$ because each principal unit has order $2$.
	\end{proof}
	
	To examine the block intersection numbers of $dev(E)$ we use the well-known result that in a Galois ring of characteristic~$4$ the Teichmüller set is a relative difference set. We add a proof in \autoref{prop:TeichmuellerSet_PDS_even}. Let $G$ be a group of order $mn$ that contains a normal subgroup $N$ of order $n$. A $k$-subset $D$ of $G$ is called an \emph{$(m,n,k,\lambda)$-relative difference set} in $G$ relative to $N$ if each element of $G\setminus N$ occurs exactly $\lambda$ times and the elements of $N$ do not occur in $\Delta D$.
	
	\begin{proposition}
	\label{prop:TeichmuellerSet_PDS_even}
		In $\GR(4,r)$, $r \ge 2$, the Teichmüller set $\T = \{0,1, \xi, \dots, \xi^{2^r-2}\}$ is a $(2^r,2^r,2^r,1)$-relative difference set in the additive group of $\GR(4,r)$ relative to the maximal ideal $\I = 2\GR(4,r)$.
	\end{proposition}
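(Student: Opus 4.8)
The plan is to verify directly the two requirements in the definition of a relative difference set, turning the quantitative one into a counting argument powered by an explicit formula for the sum of two Teichmüller elements. First I would settle the parameters and the qualitative condition. Since $|\T| = 2^r$, $|\I| = 2^r$ (as $\GR(4,r)/\I \cong \F_{2^r}$) and $|\GR(4,r)| = 2^{2r}$, the only parameters that can occur are $(m,n,k,\lambda) = (2^r,2^r,2^r,1)$, and $\I$ is normal because $(\GR(4,r),+)$ is abelian. That no element of $\I$ lies in $\Delta\T$ is quick: a difference $\xi^i-\xi^j$ of distinct nonzero Teichmüller elements is a unit by \autoref{lem:unit_difference_unit} (their principal-unit parts being trivial), and $\xi^i-0 = \xi^i$ as well as $0-\xi^i = -\xi^i$ are units because $-1$ is a unit in $\GR(4,r)$; hence $\Delta\T\subseteq\GR(4,r)\setminus\I$. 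As the multiset $\Delta\T$ has $2^r(2^r-1)$ entries while $\GR(4,r)\setminus\I$ has $2^{2r}-2^r = 2^r(2^r-1)$ elements, it then suffices to prove that the map sending a pair of distinct elements $a,b\in\T$ to $a-b$ is injective; this promotes it to a bijection onto $\GR(4,r)\setminus\I$, so each unit occurs exactly once in $\Delta\T$ and $\lambda = 1$.

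The key step is the following addition formula, in which overlines denote reduction modulo $\I$: for $u,v\in\T$, if $u+v = \sigma+2\tau$ is the $2$-adic representation with $\sigma,\tau\in\T$, then $\bar\sigma = \bar u+\bar v$ in $\GR(4,r)/\I\cong\F_{2^r}$ and $\tau^2 = uv$. The first identity holds since $2\tau\in\I$. For the second, write $u = s^2$, $v = t^2$ with $s,t\in\T$ --- possible because squaring is a bijection of $\T$, the group $\T^*$ being cyclic of the odd order $2^r-1$ --- and let $s+t = \sigma'+2\tau'$ with $\sigma',\tau'\in\T$. Expanding $(\sigma'+2\tau')^2$ and using $4 = 0$ in $\GR(4,r)$ gives $(s+t)^2 = (\sigma')^2$, whereas distributivity gives $(s+t)^2 = s^2+t^2+2st = u+v+2st$; hence $u+v = (\sigma')^2+2st$, and since $(\sigma')^2, st\in\T$, uniqueness of the $2$-adic representation forces $\tau = st$, so $\tau^2 = s^2t^2 = uv$.

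It remains to deduce injectivity. Suppose $a-b = a'-b'$ with $a,b,a',b'\in\T$ and common value a unit, so $a\ne b$ and $a'\ne b'$. Rewrite this as $a+b' = a'+b$ and apply the addition formula to each side; since the two sides have the same $2$-adic representation, their $\sigma$- and $\tau$-parts coincide, which yields $\bar a+\bar{b'} = \bar{a'}+\bar b$ in $\F_{2^r}$ and $ab' = \tau^2 = a'b$ in $\GR(4,r)$, hence also $\bar a\bar{b'} = \bar{a'}\bar b$. Thus $\{\bar a,\bar{b'}\}$ and $\{\bar{a'},\bar b\}$ are the multisets of roots of one monic quadratic over the field $\F_{2^r}$, so they coincide; since $\bar a = \bar b$ would force $a = b$ (reduction being injective on $\T$), we conclude $\bar a = \bar{a'}$ and $\bar b = \bar{b'}$, whence $a = a'$ and $b = b'$. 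I expect the addition formula $\tau^2 = uv$ to be the only genuine obstacle: it is the single place where the characteristic-$4$ structure (the identity $4 = 0$ together with bijectivity of squaring on $\T$) really enters, and the task is to derive it in an elementary, self-contained way rather than by appealing to Witt-vector identities; everything else is routine counting and reduction modulo $\I$.
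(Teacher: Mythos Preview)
Your argument is correct. The addition formula $\tau^2 = uv$ that you isolate is exactly the ``carry'' identity for Teichm\"uller representatives in characteristic~$4$, and your derivation of it via $u = s^2$, $v = t^2$ and $(s+t)^2 = (\sigma')^2$ is clean; the one implicit step, passing from $(\sigma')^2 = u+v+2st$ to $u+v = (\sigma')^2 + 2st$, uses $-2 = 2$ in $\GR(4,r)$ and is harmless. The injectivity argument via $a+b' = a'+b$ and the common quadratic over $\F_{2^r}$ then works uniformly, including when some of $a,b,a',b'$ vanish.

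The paper takes a different but closely related route: rather than proving injectivity of $(\beta,\beta')\mapsto \beta-\beta'$, it explicitly \emph{inverts} this map by writing down, for a given unit $\alpha_0 + 2\alpha_1$, the formulas $\beta = \alpha_0^{-1}(\alpha_0+\alpha_1)^2$ and $\beta' = \alpha_0^{-1}\alpha_1^2$, and then matches solution counts of two systems to conclude these are the only solutions. Your approach is more structural---it isolates the Witt-vector-style carry law and reduces the question to a Vieta-type argument in $\F_{2^r}$---while the paper's approach is constructive, yielding the explicit representation of each unit as a Teichm\"uller difference. Both rest on the same characteristic-$4$ squaring identity, so the underlying algebra is the same; the packaging differs.
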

	\begin{proof}
		This proof is similar to the one by \textcite[Lemmas~2 and~3]{bonnecaze1997}. From \autoref{lem:unit_difference_unit} we know that the difference $\beta-\beta'$ of two Teichmüller elements $\beta, \beta' \in \T$ is a unit if and only if $\beta \ne \beta'$. Additionally, we know that each element $x$ of $\GR(4,r)$ has a unique $2$-adic representation $x = \alpha_0 + 2\alpha_1,\ \alpha_0, \alpha_1 \in \T$. We consider the equation
		\begin{align}
		\label{eq:PDS_1}
			\alpha_0 + 2\alpha_1 = \beta - \beta'.
		\end{align}
		We choose $\beta$ and $\beta'$ and fix thereby $\alpha_0$ and $\alpha_1$. Hence, \cref{eq:PDS_1} has $4^r$ solutions $(\alpha_0, \alpha_1, \beta, \beta')$. In the next step we consider the system of equations
		\begin{align*}
			\alpha_0^2+2\alpha_0\alpha_1 + \alpha_1^2 &= \alpha_0\beta\\
			\alpha_1^2 &= \alpha_0 \beta'\\
			\beta &= \beta',\qquad \text{if } \alpha_0 = 0.
		\end{align*}
		The system of equations has also $4^r$ solutions $(\alpha_0, \alpha_1, \beta, \beta')$: It has $2^r$ solutions if $\alpha_0 = 0$, namely $(0,0,\beta, \beta)$ for arbitrary $\beta$, and it has $2^r(2^r-1)$ solutions if $\alpha_0 \ne 0$: In this case, we can chose $\alpha_0 \in \T^*, \alpha \in \T$ arbitrarily and obtain unique solutions for $\beta, \beta'$, namely 
		\begin{align}
		\label{eq:beta}
			\beta &= \alpha_0^{-1}(\alpha_0 + \alpha_1)^2,\\
		\label{eq:beta'}
			\beta' &= \alpha_0^{-1}\alpha_1^2.
		\end{align}
		It is easy to check that the solutions to the system of equations also solve \cref{eq:PDS_1}. Hence, each unit $\alpha_0 + 2\alpha_1, \alpha_0 \ne 0,$ can be uniquely represented as the difference of two Teichmüller elements $\beta, \beta'$ as described in \cref{eq:beta} and \cref{eq:beta'}.
	\end{proof}
	
	\autoref{prop:TeichmuellerSet_PDS_even} implies
	
	\begin{corollary}
	\label{cor:diff_Teichmueller_even}
		In $\GR(4, r)$, $r \ge 2$, we have for each $\alpha \in \T$ that
		\begin{align*}
			\Delta(1+2\alpha)\T^* = \GR(4,r) \setminus (\I \cup (1+2\alpha)\T^* \cup -(1+2\alpha)\T^*),
		\end{align*}
		and each element of $\GR(4,r) \setminus (\I \cup (1+2\alpha)\T^* \cup -(1+2\alpha\T^*)$ has multiplicity $1$ in this multiset.
	\end{corollary}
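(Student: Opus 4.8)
The plan is to obtain this as a quick consequence of \autoref{prop:TeichmuellerSet_PDS_even} together with a short piece of bookkeeping on multisets of differences. The starting observation is that $1+2\alpha$ is a unit of $\GR(4,r)$, so multiplication by it is an additive bijection of $\GR(4,r)$; this bijection carries the multiset $\Delta\T^*$ onto $\Delta\big((1+2\alpha)\T^*\big)$ while preserving multiplicities, so it suffices to compute $\Delta\T^*$ once and then transport the description along multiplication by $1+2\alpha$.

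First I would record the multiset identity $\Delta\T = \Delta\T^* \uplus \T^* \uplus (-\T^*)$, obtained by splitting the pairs of distinct Teichmüller elements according to whether one of the two elements is $0$: the differences $\beta - 0$ with $\beta\in\T^*$ contribute the summand $\T^*$, and the differences $0 - \beta'$ with $\beta'\in\T^*$ contribute the summand $-\T^*$. Here I would invoke \autoref{lem:-1_GR} in the case $p=2$ to see that $-1$ is a principal unit, hence $-1\notin\T^*$, so that $\T^*$ and $-\T^*$ are disjoint; moreover both consist of units and therefore lie in $\GR(4,r)\setminus\I$.

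Next I would apply \autoref{prop:TeichmuellerSet_PDS_even}: since $\T$ is a $(2^r,2^r,2^r,1)$-relative difference set relative to $\I$, the multiset $\Delta\T$ is precisely $\GR(4,r)\setminus\I$ with every element of multiplicity $1$. Comparing with the identity above, I would remove the (multiplicity-one) sub-multiset $\T^*\uplus(-\T^*)$ to conclude $\Delta\T^* = \GR(4,r)\setminus(\I\cup\T^*\cup(-\T^*))$, again with all multiplicities $1$. Finally, applying multiplication by the unit $1+2\alpha$ and using that $(1+2\alpha)\I=\I$ (as $\I$ is an ideal) and $(1+2\alpha)(-\T^*) = -(1+2\alpha)\T^*$ gives the asserted formula for $\Delta\big((1+2\alpha)\T^*\big)$, with each element of the right-hand side occurring exactly once.

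There is no genuine obstacle; the only things needing a little care are the multiplicity bookkeeping — checking that $\T^*$ and $-\T^*$ are disjoint and are really contained in the multiplicity-one part $\GR(4,r)\setminus\I$ of $\Delta\T$, and that $\I$, $\T^*$, $-\T^*$ are pairwise disjoint so that the set-difference decompositions are clean — and confirming that multiplying the equation for $\Delta\T^*$ by $1+2\alpha$ produces exactly the stated right-hand side.
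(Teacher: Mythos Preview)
Your proposal is correct and follows essentially the same route as the paper: both arguments deduce the result from \autoref{prop:TeichmuellerSet_PDS_even} by noting that passing from $\T$ to $\T^*$ removes exactly the differences $\T^*-0=\T^*$ and $0-\T^*=-\T^*$ from the multiset $\Delta\T$. Your write-up is in fact more careful than the paper's, as you make explicit the reduction to the case $\alpha=0$ via multiplication by the unit $1+2\alpha$ and the disjointness checks needed for clean bookkeeping, whereas the paper leaves these implicit.
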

	\begin{proof}
		According to \autoref{prop:TeichmuellerSet_PDS_even} the Teichmüller set $\T$ is a $(2^r, 2^r, 2^r, 1)$ relative difference set in the additive group of $\GR(4,r)$ relative to $\I$. Hence, by removing $0$ from $\T$ to obtain $\T^*$ we remove differences of the type $\T^*-0 = \T^*$ and $0-\T^* = -\T^*$ from our set of differences.
	\end{proof}
	
	These results enable us to determine an intersection number of $dev(E)$. For our purpose the following \autoref{lem:sum_Teichmueller_even_atmost2} suffices. A more detailed analysis of the differences and sums in the Teichmüller set in $\GR(4,r)$, that includes the following result in a slightly different way, is given by \cite[Section~III.--C.]{hammons1994} and \textcite[Theorem~1]{bonnecaze1997}. Arguments of this type have also been used by \textcite{ghinelli2003,deresmini2002} in the theory of difference sets to obtain ovals in the development of a difference set and by \textcite{pottzhou2017} to construct Cayley graphs .
	\begin{lemma}
	\label{lem:sum_Teichmueller_even_atmost2}
		In $\GR(4,r)$, $r \ge 2$, an element $s$ of the multiset $\Delta_+(1+2\alpha)\T^*$ has multiplicity~$2$ if $s$ is a unit and multiplicity $1$ if $s \in \I \setminus \{0\}$.
	\end{lemma}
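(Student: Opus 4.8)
The plan is to reduce at once to the case $\alpha=0$ and then to read off both multiplicities from the relative difference set property of the Teichmüller set established in \autoref{prop:TeichmuellerSet_PDS_even}. Since $1+2\alpha$ is a unit and $(1+2\alpha)a+(1+2\alpha)b=(1+2\alpha)(a+b)$, multiplication by $1+2\alpha$ is a bijection of $\GR(4,r)$ that carries the multiset $\{a+b:a,b\in\T^*\}$ onto $\Delta_+\bigl((1+2\alpha)\T^*\bigr)$ and preserves both the property of being a unit and the property of lying in $\I\setminus\{0\}$. The side condition $a\ne-b$ in the definition of $\Delta_+$ is empty throughout: by \autoref{lem:-1_GR} (or simply because $\T^*$ has odd order $2^r-1$) we have $-1\notin\T^*$, so $\T^*$ and $-\T^*$ are disjoint. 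Hence it suffices to treat the multiset $\Delta_+\T^*=\{a+b:a,b\in\T^*\}$.

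I would then split this multiset into its diagonal part, the sums $a+a=2a$ with $a\in\T^*$, and its off-diagonal part, the sums $a+b$ with $a,b\in\T^*$ and $a\ne b$. Two Teichmüller elements with $2a=2a'$ differ by an element of $\I$ and hence coincide, and $|\T|=|\I|=2^r$, so $a\mapsto 2a$ is a bijection $\T\to\I$; thus the diagonal part is exactly $\I\setminus\{0\}$, each element occurring once. Reducing modulo $\I$ shows that every off-diagonal sum is a unit, and each unordered pair $\{a,b\}$ is counted twice. As the diagonal and off-diagonal parts land in disjoint subsets of $\GR(4,r)$ (non-units versus units), every element of $\I\setminus\{0\}$ has multiplicity exactly $1$ in $\Delta_+\T^*$, while the multiplicity of a unit $s$ equals twice the number of $2$-subsets $\{a,b\}$ of $\T^*$ with $a+b=s$.

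It therefore only remains to show that $\{a,b\}\mapsto a+b$ is injective on the $2$-subsets of $\T^*$; then any unit occurring in $\Delta_+\T^*$ occurs with multiplicity exactly $2$, which completes the proof. Suppose $a+b=c+d$ with $a,b,c,d\in\T^*$, $a\ne b$, $c\ne d$ and $\{a,b\}\ne\{c,d\}$; a common element of the two pairs would force them to be equal, so they are disjoint and in particular $a\ne c$ and $b\ne d$. Rearranging gives $a-c=d-b$, a nonzero element that does not lie in $\I$ because $a$ and $c$ are distinct members of the transversal $\T$; it is therefore represented at least twice in $\Delta\T$, namely by the ordered pairs $(a,c)$ and $(d,b)$, contradicting \autoref{prop:TeichmuellerSet_PDS_even}, which says that each element of $\GR(4,r)\setminus\I$ occurs exactly once in $\Delta\T$. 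The point requiring care is the bookkeeping around the side condition $a\ne-b$ and the disjointness of the diagonal and off-diagonal parts, both of which rest on the characteristic-$4$ fact that $-\T^*$ and $\T^*$ are disjoint; once this is in place, \autoref{prop:TeichmuellerSet_PDS_even} does the real work, and no explicit Teichmüller-addition identity is needed.
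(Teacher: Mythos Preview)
Your proof is correct and follows essentially the same route as the paper's: split into diagonal sums $a+a=2a$ (which hit $\I\setminus\{0\}$ bijectively) and off-diagonal sums $a+b$ with $a\ne b$ (which are units), and for the latter deduce injectivity of $\{a,b\}\mapsto a+b$ by rewriting a collision $a+b=c+d$ as $a-c=d-b$ and invoking the relative difference set property of $\T$. The paper cites \autoref{cor:diff_Teichmueller_even} at this step, whereas you go straight to \autoref{prop:TeichmuellerSet_PDS_even}, but that is only a cosmetic difference; your explicit reduction to $\alpha=0$ and your remark that the side condition $a\ne -b$ is vacuous (since $-1\notin\T^*$ in characteristic~$4$) are points the paper leaves implicit, so if anything your write-up is slightly more careful.
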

	\begin{proof}
		Let $\beta, \gamma$ be two distinct elements of the Teichmüller group $\T^*$. Since $2\T = \I$, it follows that $\beta+\beta = 2\beta \ne 2\gamma = \gamma+\gamma$. Hence, the elements of $\I \setminus \{0\}$ are represented once as the sum of two elements of $\T^*$. We now consider sums of the type $\beta + \gamma, \beta \ne \gamma$. It is clear that each sum $s = \beta + \gamma$ has at least two representations: $\beta + \gamma$ and $\gamma + \beta$. To prove that there are no more than those two representations we assume that there exist elements $\beta', \gamma' \in \T^*,\ \beta', \gamma' \notin \{\beta,\gamma\}$ such that $\beta+\gamma = \beta' + \gamma'$. This is equivalent to $\beta-\beta' = \gamma'-\gamma$. However, according to \autoref{cor:diff_Teichmueller_even} all the differences of two distinct elements of $\T^*$ are distinct. Consequently, $\beta + \gamma \ne \beta' + \gamma'$.
	\end{proof}
	
	\autoref{cor:diff_Teichmueller_even} and \autoref{lem:sum_Teichmueller_even_atmost2} lead us to the following statement about the block intersection numbers in $dev(E)$:
	
	\begin{lemma}
	In $\GR(4,r)$, $r \ge 2$, let $E_\alpha = (1+2\alpha)\T^*, \alpha \in \T,$ and let $d \in \GR(4,r)$. Then
		\begin{align*}
			\left|(E_{\alpha} + d) \cap E_{\alpha}\right| &=
			\begin{cases}
				1, 	& \textrm{if } d \in \Delta E_{\alpha},\\
				0,	& \textrm{in any other case,}
			\end{cases}\\
			\left|(E_{\alpha} + d) \cap -E_{\alpha}\right| &=
			\begin{cases}
				2, 	& \textrm{if } d \in \left(\Delta_+-E_{\alpha}\right) \setminus \I,\\
				1, 	& \textrm{if } d \in \I \setminus \{0\},\\
				0,	& \textrm{in any other case.}
			\end{cases}
		\end{align*}
	\end{lemma}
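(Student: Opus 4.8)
The plan is to reinterpret each of the two intersection numbers as the multiplicity of $d$ in a suitable difference or sum multiset of the coset $E_\alpha$, and then to read the answer off from \autoref{cor:diff_Teichmueller_even} and \autoref{lem:sum_Teichmueller_even_atmost2}. First I would observe that $|(E_\alpha+d)\cap E_\alpha|$ equals the number of ordered pairs $(e,e')\in E_\alpha\times E_\alpha$ with $e+d=e'$, which for $d\ne 0$ is exactly the multiplicity of $d$ in the multiset $\Delta E_\alpha$ (the diagonal pairs $e=e'$ only contribute to the value $0$). By \autoref{cor:diff_Teichmueller_even} the multiset $\Delta E_\alpha$ consists precisely of the elements of $\GR(4,r)\setminus(\I\cup E_\alpha\cup -E_\alpha)$, each with multiplicity $1$, and $0\notin\Delta E_\alpha$; so $|(E_\alpha+d)\cap E_\alpha|$ is $1$ if $d\in\Delta E_\alpha$ and $0$ otherwise. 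Only values of $d$ with $E_\alpha+d\ne E_\alpha$ are relevant for block intersection numbers, so the degenerate case $d=0$ may be ignored.

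For the second quantity I would note that $|(E_\alpha+d)\cap(-E_\alpha)|$ equals the number of ordered pairs $(e,e')\in E_\alpha\times E_\alpha$ with $e+d=-e'$, i.e.\ with $d=-(e+e')$, so it is the multiplicity of $d$ in the multiset $\{-(e+e'):e,e'\in E_\alpha\}$. The first step is to identify this multiset with $\Delta_+(-E_\alpha)$: the definition of $\Delta_+$ discards pairs of elements of $-E_\alpha$ summing to $0$, i.e.\ pairs $(-e,-e')$ with $e'=-e$, but since $-1$ is a principal unit of $\GR(4,r)$ (\autoref{lem:-1_GR}) one may write $-1=1+2$, so $-E_\alpha=(1+2(1+\alpha))\T^*$ is a coset of $\T^*$ different from, hence disjoint from, $E_\alpha$; therefore $-e\notin E_\alpha$ for every $e\in E_\alpha$, no pairs are discarded, and $\Delta_+(-E_\alpha)=\{-(e+e'):e,e'\in E_\alpha\}$ as multisets (the diagonal pairs $e=e'$ survive and contribute the doubles $2(-e)$). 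Since $-E_\alpha$ has the form $(1+2\beta)\T^*$ with $\beta=1+\alpha\in\T$, \autoref{lem:sum_Teichmueller_even_atmost2} applies to it and gives multiplicity $2$ for every unit in $\Delta_+(-E_\alpha)$ and multiplicity $1$ for every element of $\I\setminus\{0\}$ in it; moreover every element of $\I\setminus\{0\}$ does occur, because the diagonal pairs already produce $2(-E_\alpha)=2\T^*=\I\setminus\{0\}$. Combining these, $|(E_\alpha+d)\cap(-E_\alpha)|$ is $2$ when $d$ is a unit lying in $\Delta_+(-E_\alpha)$, is $1$ when $d\in\I\setminus\{0\}$, and is $0$ otherwise (for $d=0$ it is $0$ since $E_\alpha\cap(-E_\alpha)=\emptyset$), which is the claimed case distinction.

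I do not expect a genuinely hard step here, since the statement is a direct consequence of \autoref{cor:diff_Teichmueller_even} and \autoref{lem:sum_Teichmueller_even_atmost2}; the only points needing care are the multiset bookkeeping — matching the conventions of $\Delta$ and $\Delta_+$, which exclude respectively the diagonal differences and the pairs summing to $0$, to the pair counts above — and the short computation showing that $E_\alpha$ and $-E_\alpha$ are distinct cosets of $\T^*$, which is precisely what guarantees that the excluded pairs are truly absent and that the two blocks are disjoint. If anything, that last computation (equivalently, that $1+2$ does not stabilise $E_\alpha$) is the one spot where I would slow down; everything else is substitution.
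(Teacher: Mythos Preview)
Your approach is exactly the one the paper intends: it states this lemma without proof, simply remarking that it follows from \autoref{cor:diff_Teichmueller_even} and \autoref{lem:sum_Teichmueller_even_atmost2}, and you have correctly spelled out the bookkeeping that turns those two results into the claimed case distinction.

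One small inaccuracy: you write $-E_\alpha=(1+2(1+\alpha))\T^*$ ``with $\beta=1+\alpha\in\T$'', but $\T$ is not closed under addition, so $1+\alpha$ need not lie in $\T$. What is true is that $-1\in\mathbb{P}$, hence $(-1)(1+2\alpha)\in\mathbb{P}=\{1+2\beta:\beta\in\T\}$, so $-E_\alpha=(1+2\beta)\T^*$ for the unique $\beta\in\T$ with $2\beta=2(1+\alpha)$. This is enough for \autoref{lem:sum_Teichmueller_even_atmost2} to apply, and the rest of your argument goes through unchanged.
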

	
	Since $(1+2\alpha)\T^* + d, \alpha \in \T, d \in \GR(4,r),$ and $-(1+2\alpha)\T^*$ are blocks of the design $dev(E)$, it follows that $dev(E)$ contains blocks that intersect in two elements. According to \autoref{cor:intersection_numbers_devC_Davis} however, the design $dev(C)$ has block intersection numbers $2^r-2, 0$ and $1$, and $2^r-2 > 2$ if $r \ge 3$. We conclude that the combinatorial designs $dev(C)$ and $dev(E)$ are nonisomorphic if $p=2$ and $r \ge 3$.

%

	Now, let $p$ be an odd prime.
	
	\begin{lemma}
	\label{lem:T=-T}
		In $\GR(p^2,r)$, $p$ odd, we have $\T^* = -\T^*$ .
	\end{lemma}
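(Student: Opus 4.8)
The plan is to read this off immediately from \autoref{lem:-1_GR}. Since $p$ is odd, \autoref{lem:-1_GR} (applied with $m=2$) tells us that $-1$ lies in the Teichmüller group $\T^*$ of $\GR(p^2,r)$. Because $\T^*$ is a multiplicative group, multiplication by the fixed element $-1 \in \T^*$ is a bijection of $\T^*$ onto itself, so $-\T^* = (-1)\cdot\T^* = \T^*$, which is exactly the claim.

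For a self-contained argument one can instead use only the structure of the unit group recorded in the excerpt. Writing $\GR(p^2,r)^* = \T^* \times \mathbb{P}$, decompose $-1 = tu$ with $t \in \T^*$ and $u \in \mathbb{P}$. Squaring gives $t^2 u^2 = 1$, and by uniqueness of this factorization $t^2 = 1$ and $u^2 = 1$. Since in characteristic $p^2$ the group of principal units $\mathbb{P}$ has order $p^r$, which is odd, $u^2 = 1$ forces $u = 1$; hence $-1 = t \in \T^*$, and one concludes as before.

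There is essentially no obstacle here: once \autoref{lem:-1_GR} (or, equivalently, the decomposition $\GR(p^2,r)^* = \T^* \times \mathbb{P}$ with $\mathbb{P}$ of odd order) is in hand, the statement is a one-line corollary. The only point to be slightly careful about is that $\mathbb{P}$ has odd order precisely because we are in characteristic $p^2$ with $p$ odd, which is already part of the standing hypotheses.
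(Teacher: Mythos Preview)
Your proof is correct and matches the paper's approach exactly: the paper's proof is the single sentence ``According to \autoref{lem:-1_GR}, $-1 \in \T^*$ if $p$ is odd,'' which is precisely your first paragraph. Your additional self-contained argument via the decomposition $\GR(p^2,r)^* = \T^* \times \mathbb{P}$ is also correct and simply reproves the relevant case of \autoref{lem:-1_GR}.
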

	\begin{proof}
		According to \autoref{lem:-1_GR}, $-1 \in \T^*$ if $p$ is odd. 
	\end{proof}
	
	Hence, the Teichmüller group consists of pairs of an element and its additive inverse and we can write
	
	\begin{equation*}
		\T^* = \left\lbrace 1, \xi, \xi^2, \dots, \xi^{(p^r-3)/2}, -1, -\xi, -\xi^2, \dots, -\xi^{(p^r-3)/2} \right\rbrace.
	\end{equation*}
	
	From this notation we can deduce 
	\begin{lemma}
	\label{lem:pairwise_differences}
		In $\GR(p^2,r)$, $p$ odd, a difference $d$ of two distinct elements of $\T^*$ occurs at least twice in $\Delta \T^*$, except if $d \in 2\T^*$, then $d$ occurs at least once in $\Delta \T^*$. Consequently, a difference $d$ has odd multiplicity in $\Delta \T^*$ if and only if $d \in 2\T^*$.
	\end{lemma}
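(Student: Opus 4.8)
The plan is to exploit the negation map on $\T^*$, which is available because $-1 \in \T^*$ by \autoref{lem:T=-T}, together with the fact that $2$ is a unit of $\GR(p^2,r)$ when $p$ is odd. For a fixed $d \in \GR(p^2,r)$, consider the set of ordered representations
\[
	R_d = \{ (\alpha,\alpha') \in \T^* \times \T^* : \alpha \ne \alpha',\ \alpha - \alpha' = d \},
\]
so that the multiplicity of $d$ in the multiset $\Delta \T^*$ equals $|R_d|$. First I would introduce the map $\sigma \colon R_d \to R_d$, $\sigma(\alpha,\alpha') = (-\alpha',-\alpha)$, and check that it is well defined: by \autoref{lem:T=-T} the elements $-\alpha,-\alpha'$ lie in $\T^*$, they remain distinct, and $(-\alpha') - (-\alpha) = \alpha - \alpha' = d$. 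It is immediate that $\sigma$ is an involution, so $R_d$ decomposes into orbits of size $1$ and $2$.

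The second step is to pin down the fixed points of $\sigma$. A pair $(\alpha,\alpha')$ is fixed exactly when $\alpha = -\alpha'$, which forces $d = \alpha - \alpha' = 2\alpha \in 2\T^*$. Conversely, if $d = 2\beta$ for some $\beta \in \T^*$, then since $2$ is a unit we have $\beta \ne -\beta$ (otherwise $2\beta = 0$), so $(\beta,-\beta) \in R_d$, and moreover $\beta$ is the unique solution of $2\alpha = d$, so $(\beta,-\beta)$ is the only fixed point of $\sigma$. Hence $\sigma$ has exactly one fixed point when $d \in 2\T^*$ and none otherwise.

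Finally I would read off the statement from the orbit count. If $d \notin 2\T^*$, then $|R_d|$ is even; since $d$ is assumed to be a difference of two distinct Teichmüller units, $R_d \ne \emptyset$, hence $|R_d| \ge 2$. If $d = 2\beta \in 2\T^*$, then $(\beta,-\beta) \in R_d$ already gives $|R_d| \ge 1$, while the orbit count shows $|R_d|$ is odd. Thus the multiplicity of $d$ in $\Delta\T^*$ is odd if and only if $d \in 2\T^*$, the degenerate case $R_d = \emptyset$ being consistent since every element $2\beta$ of $2\T^*$ is realized as $\beta - (-\beta)$. I do not expect a genuine obstacle; the only point that needs care is the fixed-point analysis in the second step, where the invertibility of $2$ in $\GR(p^2,r)$ — valid precisely because $p$ is odd — is what guarantees both that $\beta \ne -\beta$ and that $2\alpha = d$ has a unique solution, so that $\sigma$ has at most one fixed point.
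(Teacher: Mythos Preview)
Your proposal is correct and follows essentially the same approach as the paper: both arguments use the involution $(\alpha,\alpha') \mapsto (-\alpha',-\alpha)$ on representations of $d$ as a difference in $\T^*$, enabled by \autoref{lem:T=-T}, and identify the fixed points as precisely the pairs with $\alpha' = -\alpha$, i.e.\ $d \in 2\T^*$. Your version is slightly more explicit in framing this as an orbit decomposition and in invoking the invertibility of $2$ to pin down that there is \emph{exactly one} fixed point when $d \in 2\T^*$, a detail the paper leaves implicit.
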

	\begin{proof}
		Let $d = \alpha - \alpha' \in \Delta \T^*$ be the difference of two arbitrary elements of the Teichmüller group $\T^*$. According to \autoref{lem:T=-T}, the elements $-\alpha, -\alpha'$ are also in $\T^*$. Hence, $-\alpha'- (-\alpha) = d$ is another representation of $d$ as the difference of two elements from $\T^*$. However, those two representations are the same, if $\alpha' = -\alpha$. Then $d=2\alpha$, and it is not guaranteed that $d$ has more than this single representation. It could, however, happen, that there exist more distinct pairs $(\beta_1,\beta_1'), \dots, (\beta_\ell,\beta_\ell'), \beta_1, \beta_1', \dots, \beta_\ell, \beta_\ell' \in \T^* \setminus \{\alpha, \alpha'\}$ with $\beta_i - \beta_i' = -\beta_i' - (-\beta_i) = d$. Then, $d$ has $2\ell+2$ representations as a difference if $d \notin 2\T^*$ and $2\ell+1$ representations if $d \in \T^*$.
	\end{proof}
	
	\begin{corollary}
	\label{cor:int_num_>1}
		In $\GR(p^2,r)$, $p$ odd (except $p=3$ and $r=1$), let $d \in (\Delta \T^* \setminus 2\T^*)$. Then, the block intersection number $|\T^* \cap (\T^* + d)| \ge 2$.
	\end{corollary}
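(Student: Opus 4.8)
The plan is to identify the block intersection number $|\T^* \cap (\T^* + d)|$ with the multiplicity of $d$ in the multiset $\Delta\T^*$, and then to read off the bound directly from \autoref{lem:pairwise_differences}.

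First I would run the standard development-counting argument, the same one used in the proof of \autoref{prop:dev_design}: an element $x$ lies in $\T^* \cap (\T^* + d)$ precisely when $x \in \T^*$ and $x - d \in \T^*$, so setting $a = x$ and $a' = x - d$ gives a bijection between $\T^* \cap (\T^* + d)$ and the set of ordered pairs $(a,a') \in \T^* \times \T^*$ with $a - a' = d$. Since $d \in \Delta\T^*$ is by definition a difference of two distinct Teichmüller elements, $d \ne 0$, so $a \ne a'$ is automatic; therefore $|\T^* \cap (\T^* + d)|$ equals the multiplicity of $d$ in $\Delta\T^*$.

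Next I would invoke \autoref{lem:pairwise_differences}. Its hypothesis $\T^* = -\T^*$ is available because $p$ is odd (\autoref{lem:T=-T}), and by assumption $d \in \Delta\T^*$ while $d \notin 2\T^*$. Hence that lemma yields that $d$ occurs at least twice in $\Delta\T^*$, and combining this with the previous paragraph gives $|\T^* \cap (\T^* + d)| \ge 2$, as claimed.

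There is no real obstacle here: the corollary is essentially a restatement of \autoref{lem:pairwise_differences} in the language of the design $dev(E)$. The only point worth a remark is the exclusion of the case $p = 3$, $r = 1$: there one has $\T^* = \{1,-1\}$ and $\Delta\T^* = \{2,-2\} = 2\T^*$, so the hypothesis $d \in \Delta\T^* \setminus 2\T^*$ is never met — the exclusion is needed not for the inequality itself but to guarantee that such a $d$ exists, which is what makes the corollary usable in the proof of \autoref{lem:int_numbers_Davis}.
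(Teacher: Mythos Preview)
Your proof is correct and follows essentially the same approach as the paper: both arguments reduce the intersection bound to the multiplicity statement in \autoref{lem:pairwise_differences}, and both explain the exclusion of $p=3$, $r=1$ by noting that $\T^* = \{1,-1\}$ forces $\Delta\T^* = 2\T^*$. Your write-up is a bit more explicit about the bijection between $\T^* \cap (\T^* + d)$ and ordered pairs in $\Delta\T^*$, which the paper leaves implicit, but the underlying reasoning is identical.
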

	\begin{proof}
		The statement follows from \autoref{lem:pairwise_differences}: Let $d = \alpha-\alpha'$ for some distinct $\alpha, \alpha' \in \T^*, \alpha' \ne -\alpha$. Then $d$ occurs at least twice in $\Delta \T^*$. In the case $p=3$ and $r=1$, the Teichmüller group contains only two elements, namely $1$ and $-1$. Hence, there are no $\alpha, \alpha' \in \T^*$ with $\alpha \ne -\alpha'$. So, we need to exclude this case.
	\end{proof}
	To finish our proof of \autoref{lem:int_numbers_Davis} we need to show that the block intersection numbers greater than $1$ from \autoref{cor:int_num_>1} are less than $p^r-2$.
	
	\begin{lemma}
	\label{lem:d_less_p^r-2}
		In $\GR(p^2,r)$, $p$ odd (except $p=3$ and $r=1$), there is no difference $d \in \Delta \T^*$ with multiplicity $p^r-2$ in $\Delta \T^*$.
	\end{lemma}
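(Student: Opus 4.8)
The plan is to show directly that the multiplicities with which elements occur in the multiset $\Delta\T^*$ are spread over enough multiplicative cosets that none of them can reach $p^r-2$.

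First I would set up the coset bookkeeping. By \autoref{lem:differences_in_T_cosets} every element of $\Delta\T^*$ is a unit of $\GR(p^2,r)$, and since $\GR(p^2,r)^*=\T^*\times\mathbb P$ with $|\mathbb P|=p^r$, the $p^r$ multiplicative cosets $(1+p\alpha)\T^*$ ($\alpha\in\T$) partition the units into classes of size $p^r-1$. The scaling map $\beta-\beta'\mapsto\gamma\beta-\gamma\beta'$ ($\gamma\in\T^*$) used in the proof of \autoref{lem:differences_in_T_cosets} is a bijection on representations, so the multiplicity of a difference $d$ in $\Delta\T^*$ is constant on the whole coset $d\T^*$. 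Counting entries, $\Delta\T^*$ has $(p^r-1)(p^r-2)$ elements, and each coset contributes its common multiplicity on all $p^r-1$ of its members, so the multiplicities summed over the $p^r$ cosets equal $p^r-2$. Consequently, if some $d$ attained multiplicity $p^r-2$, every other coset would be empty and the entire multiset $\Delta\T^*$ would lie in a single coset $K=(1+p\alpha_0)\T^*$; the remaining job is to rule this out.

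To rule it out I would exploit a multiplicative relation together with $p^2=0$. Since $p$ is odd and the case $p=3,r=1$ is excluded, we have $|\T^*|=p^r-1\ge 4$, so there is $\zeta\in\T^*$ with $\zeta\notin\{1,-1\}$; and $-1\in\T^*$ by \autoref{lem:-1_GR}. Then $1-\zeta$, $1+\zeta=1-(-\zeta)$ and $1-\zeta^2$ are each a difference of two distinct elements of $\T^*$, hence all lie in $K$. But $(1-\zeta)(1+\zeta)=1-\zeta^2$, while a product of two elements of $K$ lies in $K^2=(1+p\alpha_0)^2\T^*=(1+2p\alpha_0)\T^*$ because $p^2=0$ in $\GR(p^2,r)$. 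Thus $(1+2p\alpha_0)\T^*=(1+p\alpha_0)\T^*$, and dividing (and discarding the $p^2$-term) forces $1+p\alpha_0\in\T^*\cap\mathbb P=\{1\}$, i.e.\ $\alpha_0=0$ and $K=\T^*$. Finally $\Delta\T^*\subseteq\T^*$, together with $-1\in\T^*$, would make $\T=\T^*\cup\{0\}$ closed under subtraction, hence an additive subgroup of $(\GR(p^2,r),+)$ containing $1$; but then $p\cdot 1=p\in\T$, contradicting $p\in\I\setminus\{0\}$ and $\T\cap\I=\{0\}$. This contradiction completes the proof.

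The main obstacle is the middle step, namely ruling out that $\Delta\T^*$ lies in one coset, because identifying the coset of a difference $\beta-\beta'$ of Teichmüller elements would ordinarily require the Teichmüller (Witt-vector) expansion of $\beta-\beta'$. The identity $1-\zeta^2=(1-\zeta)(1+\zeta)$ together with $p^2=0$ sidesteps that computation entirely, and it is precisely here that the excluded case $p=3,r=1$ is genuinely different: there no $\zeta\ne\pm1$ exists in $\T^*$, and indeed the difference $2=1-(-1)$ really does attain multiplicity $p^r-2=1$.
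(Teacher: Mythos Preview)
Your proof is correct. It shares the same overall skeleton as the paper's argument: both first use the coset bookkeeping (via \autoref{lem:differences_in_T_cosets} and the count $|\Delta\T^*|=(p^r-1)(p^r-2)$) to reduce to the situation where $\Delta\T^*$ lies in a single coset $K$, then show $K=\T^*$, and finally derive a contradiction from $\T$ being an additive subgroup containing~$1$ but not~$p$.

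The genuine difference is in the middle step, identifying $K$ with $\T^*$. The paper invokes \autoref{lem:pairwise_differences}: since $p^r-2$ is odd, the single coset must be $2\T^*$, and then an additive shift argument (comparing $\alpha-1$ and $\alpha+1$) forces $2\T^*=\T^*$. Your route instead uses the multiplicative identity $(1-\zeta)(1+\zeta)=1-\zeta^2$ together with $p^2=0$ in $\GR(p^2,r)$ to obtain $K^2=K$, whence $1+p\alpha_0\in\T^*\cap\mathbb P=\{1\}$. This is a nice alternative: it bypasses \autoref{lem:pairwise_differences} entirely and pins down $K=\T^*$ in one algebraic stroke, making the argument self-contained from \autoref{lem:differences_in_T_cosets} and \autoref{lem:-1_GR}. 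The paper's route, on the other hand, makes explicit use of the parity structure already developed for \autoref{cor:int_num_>1}, so it costs nothing extra in context. Both approaches use the exclusion of $(p,r)=(3,1)$ at the same point, namely to guarantee an element $\zeta\in\T^*\setminus\{\pm1\}$.
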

	\begin{proof}
		Assume there is an element $d \in \Delta \T^*$ with multiplicity $p^r-2$. We know from \autoref{lem:differences_in_T_cosets} that $\Delta \T^*$ is the union of whole cosets of $\T^*$ and that the elements of the same coset have the same multiplicity. Counting multiplicities, $\Delta \T^*$ contains $(p^r-1)(p^r-2)$ elements. Hence, if $d$ has multiplicity $p^r-2$, every $d'$ from the same coset as $d$ will also have multiplicity $p^r-2$. Since $\T^*$ and its cosets contain $p^r-1$ elements, this means that $\Delta \T^*$ consists of $p^r-2$ times the same coset. Obviously, $p^r-2$ is odd. From \autoref{lem:pairwise_differences} we know, that an element $d \in \T^*$ only has odd multiplicity if $d \in 2\T^*$. Thus, we have $\Delta \T^* = (p^r-2)(2\T^*)$. Now, let $\alpha$ be an arbitrary element of $\T^*$. Then $\alpha-1 \in \Delta \T^*$, and there is an element $\beta \in \T^*$ such that $\alpha-1 = 2\beta$. However, since $-1 \in \T^*$, the element $\alpha+1$ is also in $\Delta \T^*$, and we have $\alpha+1 = 2\beta +2 = 2(\beta + 1)$. Since $p\ge 3$, the element $2$ is a unit, and thus $\beta + 1 \in \T^*$. Consequently, we have $\Delta \T^* = (p^r-2)\T^*$.\par
		In other words, $\T^*$ needs to be an (additive) $\left(p^{r-1}, p^{r-2}, p^{r-2}\right)$ difference set in $\T = \T^* \cup \{0\}$. This is only the case if $\T$ forms an additive group. It is clear, however, that this is not the case: Since $1 \in \T$, this would mean that $p\in \T$, but $p$ is not a unit. Hence, there is no $d \in \T^*$ with multiplicity $p^r-2$.
	\end{proof}
	\autoref{lem:d_less_p^r-2} immediately gives us the following result.
	\begin{corollary}
	\label{cor:int_num_<p^r-2}
		In $\GR(p^2,r)$, $p$ odd (except $p=3$ and $r=1$), the block intersection number $|\T^* \cap (\T^* + d)| < p^r-2$ for all $d \in \Delta \T^*$.
	\end{corollary}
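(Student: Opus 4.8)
The plan is to derive \autoref{cor:int_num_<p^r-2} directly from \autoref{lem:d_less_p^r-2}, so almost all of the work is bookkeeping already carried out in the preceding lemmas. First I would record the standard dictionary between block intersection numbers in a development and multiplicities in a difference multiset: for $d \in \GR(p^2,r)$ with $d \ne 0$, an element $x$ lies in $\T^* \cap (\T^* + d)$ exactly when $x = \alpha \in \T^*$ and $x - d = \alpha' \in \T^*$, that is, when $(\alpha,\alpha')$ is a representation $d = \alpha - \alpha'$ with $\alpha,\alpha' \in \T^*$. Since $d \ne 0$ forces $\alpha \ne \alpha'$, this yields $|\T^* \cap (\T^* + d)| = $ (multiplicity of $d$ in $\Delta\T^*$) — the same computation underlying the proof of \autoref{prop:dev_design}. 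Because every $d \in \Delta\T^*$ is nonzero, it suffices to bound these multiplicities above by $p^r-2$.

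Next I would invoke \autoref{lem:differences_in_T_cosets}: $\Delta\T^*$ is a union of whole multiplicative cosets of $\T^*$, and all elements of a fixed coset occur with the same multiplicity. Counting $\Delta\T^*$ with multiplicities gives $|\T^*|\,(|\T^*|-1) = (p^r-1)(p^r-2)$ elements, while each coset of $\T^*$ has $p^r-1$ elements; hence if $c$ distinct cosets occur with multiplicities $m_1,\dots,m_c \ge 1$, then $m_1 + \dots + m_c = p^r-2$. If $c \ge 2$, then each $m_i \le p^r-3 < p^r-2$, and if $c = 1$ the single multiplicity equals $p^r-2$ — precisely the configuration excluded by \autoref{lem:d_less_p^r-2}. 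In either case every multiplicity in $\Delta\T^*$, and therefore every block intersection number $|\T^* \cap (\T^*+d)|$, is strictly less than $p^r-2$.

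I expect no real obstacle here; the genuine difficulty of this branch has been absorbed into \autoref{lem:d_less_p^r-2}, whose proof shows that $\Delta\T^* = (p^r-2)\T^*$ would force $\T$ to be an additive subgroup of $\GR(p^2,r)$, which fails because $p \notin \T^*$. The only points needing a little care are that $d = 0$ never arises (so the intersection/multiplicity correspondence applies with no correction term) and that the case $p=3$, $r=1$ must remain excluded, since there $\T^* = \{1,-1\}$ and the coset-counting degenerates. Finally I would observe that combining \autoref{cor:int_num_>1} with \autoref{cor:int_num_<p^r-2} produces a block intersection number $N$ of $dev(E)$ with $1 < N < p^r-2$, which together with \autoref{cor:intersection_numbers_devC_Davis} completes the proof of \autoref{lem:int_numbers_Davis} in the odd case, and hence of \autoref{th:designs_noniso_Davis}.
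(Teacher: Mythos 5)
Your argument is correct and follows the paper's route exactly: the paper derives this corollary as an immediate consequence of \autoref{lem:d_less_p^r-2}, and your write-up simply makes explicit the bookkeeping the paper leaves implicit (the identification of $|\T^* \cap (\T^*+d)|$ with the multiplicity of $d$ in $\Delta\T^*$, and the coset-counting from \autoref{lem:differences_in_T_cosets} showing every multiplicity is at most $p^r-2$ with equality only in the single-coset case that the lemma excludes). No gaps.
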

	By combining \autoref{cor:int_num_>1} and \autoref{cor:int_num_<p^r-2} we see that for odd $p$ (except $p=3$ and $r=1$) there exists an element $d \in \GR(p^2,r)$ such that the intersection number of the blocks $\T^*$ and $\T^* + d$ of $dev(E)$ is greater than $1$ and less than $p^r-2$. This concludes the proof of \autoref{lem:int_numbers_Davis}.
	
\section{Conclusion}
	In this paper, we solve the isomorphism problem for two pairs of near-complete $(v,k,k-1)$ disjoint difference families in Galois rings and finite fields. However, there exist many more constructions of difference families, and it is a natural question to ask whether their associated designs are nonisomorphic. Hence, we leave to future work the task to solve the isomorphism problem for more disjoint difference families.\par
	Moreover, it would be nice to have a general powerful construction of difference families for which one can show that (almost) all their candidates are nonisomorphic. For parameters $(v,k,k-1)$, the construction presented by \textcite{buratti2017} seems to be powerful, and it will be interesting to check if this construction leads to nonisomorphic designs.

	\printbibliography
	
\end{document}